\documentclass[11pt]{article}

\usepackage{amsmath,amssymb,amsthm,mathrsfs}
\usepackage{booktabs}
\usepackage{enumitem}
\usepackage{tikz}
\usetikzlibrary{automata,positioning,arrows.meta,calc}
\usepackage[margin=1in]{geometry}
\usepackage{hyperref}

\theoremstyle{plain}
\newtheorem{theorem}{Theorem}[section]
\newtheorem{proposition}[theorem]{Proposition}
\newtheorem{lemma}[theorem]{Lemma}
\newtheorem{corollary}[theorem]{Corollary}

\theoremstyle{definition}
\newtheorem{remark}[theorem]{Remark}
\newtheorem{example}[theorem]{Example}

\theoremstyle{definition}
\newtheorem{definition}[theorem]{Definition}
\newtheorem{question}[theorem]{Open Question}

\newcommand{\fd}{\mathfrak{d}}
\newcommand{\fS}{\mathfrak{S}}
\newcommand{\cM}{\mathcal{M}}

\newcommand{\kl}{r}

\begin{document}

\title{Biorthogonal eigenvectors of the Holte carry matrix and cascade-free enumeration}

\author{Daniel Andreas Moj\\
\small Independent Researcher\\
\small Alte Mainzer Str.\ 151, 55118 Mainz, Germany\\
\small \texttt{daniel.moj.apo@gmail.com}}
\date{}

\begin{abstract}
For $k$-summand base-$N$ addition, the carry process is a Markov chain
on $\{0,\ldots,k{-}1\}$ whose transition matrix---the Holte matrix $T$---has
eigenvalues $\{N^{-j}\}_{j=0}^{k-1}$, all simple and independent of~$N$.
We give the complete biorthogonal eigenvector system.
The left eigenvectors factor as
$\sum_i u_j[i]\,x^i = c_{k,j}\,(x{-}1)^j A_{k-j}(x)$,
where $c_{k,j} = |s(k,k{-}j)|/k!$ involves unsigned Stirling numbers
and $A_n(x)$ is the Eulerian polynomial.
The right eigenvectors satisfy
$\sum_i \binom{k{-}1}{i}\,v_j[i]\,x^i = (1{+}x)^{k-1-j}\,Q_j(x)$,
where the quotient polynomials $Q_j$ have palindrome symmetry
$x^j Q_j(1/x) = (-1)^j Q_j(x)$ and converge to $(1{-}x)^j$
as $k \to \infty$; for $j \le 3$, we give explicit closed forms
in terms of~$k$.

The cascade-free avoidance count satisfies
$a(L) = (\sqrt{\fd})^L U_L(x)$
(Chebyshev polynomial of the second kind) whenever the restricted
transfer matrix has dimension $d \le 2$; we prove this is sharp:
for $k$-summand addition, Chebyshev form holds for $k = 3$ and fails
for $k \ge 4$.
The proof uses oscillatory matrix theory to establish non-vanishing
of all spectral residues.
The characteristic polynomial of the restricted transfer matrix is
determined in closed form by a Stirling-weighted Lagrange interpolation
at the Holte eigenvalues.

Two systems with binary carry state spaces are shadow-equivalent
if and only if they share the pair $(N, \fd)$.
The general classification for $k$-state systems reduces to
the characteristic polynomial of $T$.
\end{abstract}

\medskip\noindent\textbf{Keywords:} carry propagation, Eulerian numbers,
Stirling numbers, Chebyshev polynomials, transfer matrices, cascade-free sequences

\medskip\noindent\textbf{MSC 2020:} 11A63, 05A15, 15A18, 20C30, 60J10

\maketitle

\section{Introduction}
\label{sec:intro}

When $k$ integers are added in base~$N$, digit-wise addition modulo~$N$
produces a carry sequence that propagates from one digit position to the next.
This carry sequence forms a Markov chain on $\{0,\ldots,k-1\}$, and its
transition matrix---the \emph{Holte matrix}~\cite{Holte1997}---has spectral
properties that encode the combinatorics of carry propagation.

Holte~\cite{Holte1997} showed that the eigenvalues of $T$ are
$\{N^{-j}\}_{j=0}^{k-1}$, all simple, and that the eigenvectors do not
depend on the base~$N$.  Diaconis and Fulman~\cite{DiaconisFulmanJACO}
identified the right eigenvectors with the Eulerian idempotents of the
descent algebra of~$\fS_k$ and the left eigenvectors with the Foulkes
character table of~$\fS_k$ (see also Foulkes~\cite{Foulkes1980} and
Novelli--Thibon~\cite{NovelliThibon2012}), connecting carries to riffle
shuffles and symmetric functions~\cite{DiaconisFulman2009, DiaconisFulman2012}.
The arithmetic of positional representations and carry propagation is
treated classically in~\cite{Knuth1997}.

The central theme of this paper is the algebraic-combinatorial
structure of the eigenvectors of~$T$ and their connection to the
representation theory of the symmetric group~$\fS_k$.
Diaconis and Fulman~\cite{DiaconisFulmanJACO} identified the right
eigenvectors of~$T$ with the Eulerian idempotents of the descent
algebra of~$\fS_k$ and the left eigenvectors with the Foulkes
character table.  We make this identification explicit and
computationally effective by giving the complete biorthogonal
eigenvector system in closed form.
The left eigenvectors admit a generating-function factorization into
Stirling numbers of the first kind and Eulerian polynomials
(Proposition~\ref{prop:stirling-eulerian}); the normalization constants
$c_{k,j} = |s(k,k{-}j)|/k!$ are the diagonal entries of the Foulkes
character table and are determined analytically from the spectral
expansion of the carry return probability.
The right eigenvectors satisfy a binomial-palindromic
characterization involving quotient polynomials $Q_j$ that
describe the projection of the Eulerian idempotents onto
the carry state space
(Proposition~\ref{prop:right-ev},
Proposition~\ref{prop:Qj-closed}).
The entry-wise left eigenvector formula is equivalent to results of
Holte~\cite{Holte1997} and Foulkes~\cite{Foulkes1980}
(see also~\cite{DiaconisFulmanJACO}); the generating-function packaging
and the right eigenvector characterization via~$Q_j$ are new.

Two applications of this spectral theory are developed.
First, we prove a Chebyshev threshold dichotomy for cascade-free
avoidance counts (Theorem~\ref{thm:cheb-threshold}): the count
$a(L)$ admits a Chebyshev-$U_L$ parametrization if and only if the
restricted transfer matrix has dimension $d \le 2$.  For $k$-summand
addition, this means Chebyshev form holds for $k = 3$ and fails
for $k \ge 4$.
The characteristic polynomial of the restricted transfer matrix
is determined in closed form by a Stirling-weighted Lagrange
interpolation at the Holte eigenvalues
(Proposition~\ref{prop:stirling-lagrange}), where the interpolation
weights are the Foulkes character values~$c_{k,j}$; this formula
expresses a spectral invariant of the restricted matrix as a
representation-theoretic quantity of~$\fS_k$.

Second, we classify carry chains by the similarity class of their
transfer matrix: two binary-state systems are equivalent if and only
if they share $(N, \fd)$ (Theorem~\ref{thm:stochastic-shadow}),
and the general classification reduces to the characteristic polynomial
(Theorem~\ref{thm:stoch-general}).

Throughout, the spectral analysis of the carry chain
(Sections~\ref{sec:chebyshev}--\ref{sec:classification})
is driven by the representation-theoretic structure
developed in Sections~\ref{sec:spectrum}--\ref{sec:eigenvectors}:
the Foulkes character values~$c_{k,j}$ serve as interpolation weights
in the Stirling--Lagrange formula, and the biorthogonal eigenvector
system controls both the Chebyshev threshold and the classification
invariants.

\medskip
\noindent\textbf{Relationship to the companion paper.}
The companion paper~\cite{Moj2026} develops the transfer-matrix
framework for cascade-free counting in full generality.
Theorem~\ref{thm:universality} is reproved here
(Section~\ref{sec:carry});
Theorems~\ref{thm:chebyshev}
and~\ref{thm:scaling} are proved there and stated here without proof.
The dispersion formula cited in Remark~\ref{rem:fib-poisson} is also
from~\cite{Moj2026}.
The Chebyshev threshold (Theorem~\ref{thm:cheb-threshold}) and all
results in Sections~\ref{sec:spectrum}--\ref{sec:classification}
are proved entirely within this paper.

\section{The Carry Chain}
\label{sec:carry}

\subsection{Transfer matrix for $k$-summand addition}

Consider $k$ independent digits $d_1,\ldots,d_k$ drawn uniformly
from $\{0,\ldots,N-1\}$. For incoming carry $c \in \{0,\ldots,k-1\}$,
the outgoing carry is $c' = \lfloor(d_1 + \cdots + d_k + c)/N\rfloor$.
The entry formula of the \emph{Holte matrix} $T \in \mathbb{R}^{k \times k}$ is
\begin{equation}
\label{eq:holte-entry}
T[c',c] = \frac{1}{N^k}\,\sum_{s=c'N-c}^{(c'+1)N-c-1} B_k(s),
\end{equation}
where $B_k(s) = \#\{(d_1,\ldots,d_k)\in\{0,\ldots,N-1\}^k : \sum d_i = s\}$
is the $k$-fold convolution of the discrete uniform distribution.
The \emph{count matrix} is $T_{\mathrm{count}} := N^k T$.

For the two-summand case ($k = 2$), each digit either
\emph{generates} carry ($d \in \mathrm{Gen}$, $|\mathrm{Gen}| = g$),
\emph{propagates} the current carry state ($d \in \mathrm{Prop}$,
$|\mathrm{Prop}| = t$), or \emph{kills} carry to~$0$
($d \in \mathrm{Kill}$, $|\mathrm{Kill}| = \kl$), with $g + t + \kl = N$.
The $2 \times 2$ transfer matrix is
\begin{equation}
\label{eq:transfer}
T = \begin{pmatrix} \kl + t & g \\ \kl & g \end{pmatrix}, \qquad
\operatorname{tr}(T) = N, \quad \det(T) = tg =: \fd.
\end{equation}

\begin{figure}[ht]
\centering
\begin{tikzpicture}[
    >={Stealth},
    state/.style={circle, draw, thick, minimum size=28pt,
                  inner sep=1pt, font=\small},
    every edge/.style={draw, thick, ->, >=Stealth},
    lbl/.style={font=\small, fill=white, inner sep=1.5pt}
  ]
  \node[state] (s0) at (0,0) {$0$};
  \node[state] (s1) at (4,0) {$1$};
  \node[below=2pt, font=\footnotesize] at (s0.south) {no carry};
  \node[below=2pt, font=\footnotesize] at (s1.south) {carry};
  \path (s0) edge[loop above, looseness=8, in=120, out=60]
        node[lbl, above=4pt] {$\frac{\kl+t}{N}$} (s0);
  \path (s1) edge[loop above, looseness=8, in=120, out=60]
        node[lbl, above=4pt] {$\frac{g+t}{N}$} (s1);
  \path (s0) edge[bend left=20]
        node[lbl, above] {$\frac{g}{N}$} (s1);
  \path (s1) edge[bend left=20]
        node[lbl, below] {$\frac{\kl}{N}$} (s0);
\end{tikzpicture}
\caption{The carry chain for binary state spaces ($k = 2$).
Edge labels are transition probabilities under uniform digits.}
\label{fig:carry-chain}
\end{figure}
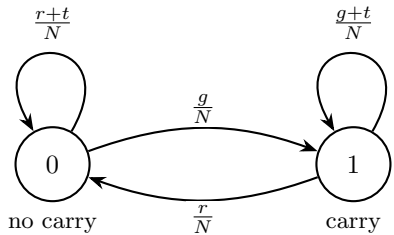

\begin{lemma}[Transfer matrix invariants]
\label{lem:transfer-inv}
The characteristic polynomial $\lambda^2 - N\lambda + \fd = 0$ depends only on $(N, \fd)$.
\end{lemma}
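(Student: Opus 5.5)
We compute the characteristic polynomial of the $2\times 2$ count matrix in \eqref{eq:transfer} directly, using the general identity $\chi_T(\lambda)=\lambda^2-\operatorname{tr}(T)\,\lambda+\det(T)$ valid for every $2\times 2$ matrix. The plan is therefore to verify the two invariants recorded next to \eqref{eq:transfer}, namely the trace and the determinant, and then substitute them.

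For the trace, the diagonal entries are $\kl+t$ and $g$. Their sum is $\kl+t+g$, which equals $N$ by the partition of the digit set into $\mathrm{Gen}$, $\mathrm{Prop}$ and $\mathrm{Kill}$.

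For the determinant, expanding gives $(\kl+t)g-g\kl=tg$, which is $\fd$ by definition. Substituting both values yields $\chi_T(\lambda)=\lambda^2-N\lambda+\fd$. The individual parameters $g,t,\kl$ enter only through $N$ and $\fd$, so the polynomial depends only on the pair $(N,\fd)$.

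There is no genuine obstacle here. The only point requiring care is which normalization of $T$ is meant. The displayed matrix in \eqref{eq:transfer} is the count matrix. Its stochastic version, as in Figure~\ref{fig:carry-chain}, is $T/N$, with characteristic polynomial $\lambda^2-\lambda+\fd/N^2$. That polynomial is also a function of $(N,\fd)$ alone, so the conclusion holds in either normalization. I will state the result for the count matrix, since that is the form in which the lemma is written.
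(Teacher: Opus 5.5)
Your argument is correct and is exactly what the paper relies on: it reads off $\operatorname{tr}(T)=\kl+t+g=N$ and $\det(T)=(\kl+t)g-g\kl=tg=\fd$ from \eqref{eq:transfer} and substitutes into $\lambda^2-\operatorname{tr}(T)\lambda+\det(T)$, and the paper records these two invariants beside \eqref{eq:transfer} without further proof. Drop your closing aside, however, because it is false and not needed: $T/N$ is not the chain of Figure~\ref{fig:carry-chain} (that chain has carry-to-carry weight $(g+t)/N$, and $T/N$ is not even stochastic when $t>0$), and the figure's chain has eigenvalues $1$ and $t/N$, which are not a function of $(N,\fd)$.
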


A sequence $x \in \{0,\ldots,N-1\}^L$ is \emph{cascade-free}
if the carry never reaches its maximum value $k - 1$; the count of
such sequences is $a(L) = \mathbf{1}^\top \widetilde{T}^L e_0$, where
$\widetilde{T} := T|_{\{0,\ldots,k-2\}}$ is the \emph{restricted
transfer matrix} (rows and columns indexed by the non-forbidden states)
and $e_0$ is the indicator of state~$0$.

\subsection{Cascade-free sequences}

The transfer matrix method for counting constrained sequences goes back
to~\cite{Stanley1997}; the cascade-free sequence results below
are developed in the companion paper~\cite{Moj2026}.

\begin{remark}[Background from the companion paper]
\label{rem:reprinted}
Theorem~\ref{thm:universality} is proved here for completeness;
Theorems~\ref{thm:chebyshev}
and~\ref{thm:scaling}
are proved in~\cite{Moj2026} and stated here without proof.
The dispersion formula cited in Remark~\ref{rem:fib-poisson}
is also from~\cite{Moj2026}.
Theorem~\ref{thm:cheb-threshold} is proved entirely within this paper.
Remark~\ref{rem:fib-poisson},
and Lemmas~\ref{lem:simple-evals}--\ref{lem:nonzero-res} are new.
\end{remark}

\begin{theorem}[Universality~{\cite[Theorems~3.4,~4.2]{Moj2026}}]
\label{thm:universality}
The cascade-free count $a(L) := |\{x \in X^L : x \text{ cascade-free}\}|$ satisfies
$a(L) = N \cdot a(L-1) - \fd \cdot a(L-2)$ with $a(0) = 1$, $a(1) = N$, and depends
only on the pair $(N, \fd)$.
\end{theorem}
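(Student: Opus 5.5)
The plan is to write $a(L)$ as a fixed linear functional of the matrix power $T^L$, where $T$ is the $2\times 2$ count matrix of~\eqref{eq:transfer}, and then let Cayley--Hamilton produce the recurrence. The dependence on $(N,\fd)$ alone then follows from Lemma~\ref{lem:transfer-inv}.

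\textbf{Step 1 (transfer-matrix representation).} I will encode a word $x\in X^L$ as a walk on the carry states. Each letter is classified as Gen, Prop or Kill and acts on the current state accordingly. The aim is a representation
\[
a(L)=u^\top T^L w
\]
with fixed vectors $u,w$ independent of $L$. Since every row of $T$ sums to $\kl+t+g=N$, the natural choice is $u=e_0$ and $w=\mathbf{1}$. Then $a(0)=e_0^\top\mathbf{1}=1$ and $a(1)=e_0^\top T\mathbf{1}=N$, which matches the stated initial values. I expect this step to be the main obstacle. One has to check that the combinatorial notion of ``cascade-free'' used in the companion framework~\cite{Moj2026} (Theorems~3.4 and~4.2 there) matches exactly this choice of boundary vectors. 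That means confirming which states are allowed at the start and at the end, and that the entries $\kl+t$, $g$, $\kl$, $g$ count precisely the admissible one-letter transitions. I would do this first by induction on $L$, conditioning on the last letter. The row-sum identity $T\mathbf{1}=N\mathbf{1}$ shows that the vector of counts starting from each state is propagated by $T$.

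\textbf{Step 2 (recurrence).} By Lemma~\ref{lem:transfer-inv}, the characteristic polynomial of $T$ is $\lambda^2-N\lambda+\fd$. Cayley--Hamilton gives $T^2=NT-\fd I$. Multiplying on the left by $u^\top T^{L-2}$ and on the right by $w$ yields
\[
a(L)=N\,a(L-1)-\fd\,a(L-2)\qquad(L\ge 2).
\]
This uses only the bilinear form from Step~1, not the particular shape of $u$ and $w$.

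\textbf{Step 3 (universality).} The recurrence coefficients are $N$ and $\fd=tg$, and the initial values are $a(0)=1$ and $a(1)=N$. Strong induction on $L$ therefore shows that every $a(L)$ is a polynomial in $N$ and $\fd$. In particular, two digit systems with different splits $(g,t,\kl)$ but the same $N$ and the same product $tg$ have identical cascade-free counts for all $L$. This completes the proof once Step~1 is secured.
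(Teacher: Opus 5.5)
Your route is the paper's: write $a(L)$ as a bilinear form in powers of a $2\times 2$ matrix with trace $N$ and determinant $\fd$, apply Cayley--Hamilton, and check $a(0)$ and $a(1)$. The paper does not verify the transfer-matrix representation either. It takes $a(L)=\mathbf{1}^\top\widetilde{T}^L e_0$ from the setup of Section~2, so your Step~1 really comes down to orienting $T$ correctly.

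One claim in your Step~1 is false and should be removed: the rows of the matrix in~\eqref{eq:transfer} do not all sum to $N$. The first row sums to $\kl+t+g=N$, but the second sums to $\kl+g=N-t$, so $T\mathbf{1}=(N,\,N-t)^\top\neq N\mathbf{1}$. If $T\mathbf{1}=N\mathbf{1}$ held, you would get $a(L)=e_0^\top T^L\mathbf{1}=N^L$ for all $L$. That contradicts the recurrence as soon as $\fd>0$, since the recurrence gives $a(2)=N^2-\fd$.

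Your argument never actually needs this identity:
\begin{itemize}
\item $a(1)=N$ uses only the first row sum.
\item Let $f_L(c)$ be the number of admissible words of length $L$ started in state $c$. Then $f_L=Tf_{L-1}$ with $f_0=\mathbf{1}$ follows from conditioning on the first letter, with no row-sum hypothesis. Hence $a(L)=f_L(0)=e_0^\top T^L\mathbf{1}$.
\end{itemize}

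Your boundary vectors are also transposed relative to the paper's $\mathbf{1}^\top\widetilde{T}^L e_0$. Trace and determinant are invariant under transposition, so the recurrence is the same either way, but the initial value is not:
\begin{itemize}
\item With the matrix as displayed, the paper's form gives $\mathbf{1}^\top T e_0=(\kl+t)+\kl$, which equals $N$ only when $\kl=g$. The paper's parenthetical justification of $a(1)=N$ is therefore an arithmetic slip.
\item Your form gives $e_0^\top T\mathbf{1}=(\kl+t)+g=N$ in general.
\end{itemize}
So reading~\eqref{eq:transfer} with rows indexed by the current state, as you do, is the orientation under which the stated initial value $a(1)=N$ actually holds.
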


\begin{proof}
The restricted transfer matrix $\widetilde{T} \in \mathbb{R}^{2\times 2}$
satisfies $\operatorname{tr}(\widetilde{T}) = N$ and
$\det(\widetilde{T}) = \fd$ (Lemma~\ref{lem:transfer-inv}).
By Cayley--Hamilton, $\widetilde{T}^2 = N\widetilde{T} - \fd\, I$, so
$a(L) = \mathbf{1}^\top \widetilde{T}^L e_0$ satisfies
$a(L) = N\,a(L{-}1) - \fd\,a(L{-}2)$ for $L \ge 2$.
The initial values are $a(0) = \mathbf{1}^\top e_0 = 1$ and
$a(1) = \mathbf{1}^\top \widetilde{T}\, e_0 = N$
(since $\widetilde{T}$ is column-stochastic with column sums~$N$
after multiplying by~$N^k$, and the binary case gives
$(\kl+t) + \kl = N$ directly from $g + t + \kl = N$).
\end{proof}

Define the \emph{coupling parameter} $x := N/(2\sqrt{\fd}) \ge 1$.

\begin{theorem}[Chebyshev representation~{\cite[Theorem~4.2]{Moj2026}}]
\label{thm:chebyshev}
For $\fd > 0$: $a(L) = (\sqrt{\fd})^L \cdot U_L(x)$ where $U_L$ is the Chebyshev polynomial
of the second kind~\cite{MasonHandscomb2003} at $x = N/(2\sqrt{\fd})$.
\end{theorem}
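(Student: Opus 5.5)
The plan is to reduce everything to the three-term recurrence of Theorem~\ref{thm:universality} and to compare it with the Chebyshev recurrence. Recall that $U_L$ satisfies
$U_0(x)=1$, $U_1(x)=2x$, and $U_L(x)=2x\,U_{L-1}(x)-U_{L-2}(x)$ for $L\ge 2$. Set $b(L):=(\sqrt{\fd})^L U_L(x)$ with $x=N/(2\sqrt{\fd})$; this is well defined because $\fd>0$. We will show that $b$ satisfies the same recurrence and the same initial values as $a$, and then conclude by induction on $L$.

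\emph{Step 1 (initial values).} We have $b(0)=U_0(x)=1=a(0)$. Also $b(1)=\sqrt{\fd}\cdot 2x=\sqrt{\fd}\cdot N/\sqrt{\fd}=N=a(1)$.

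\emph{Step 2 (recurrence).} Multiply the Chebyshev recurrence by $(\sqrt{\fd})^L$. This gives
\[
b(L)=2x\sqrt{\fd}\,(\sqrt{\fd})^{L-1}U_{L-1}(x)-\fd\,(\sqrt{\fd})^{L-2}U_{L-2}(x)=N\,b(L-1)-\fd\,b(L-2),
\]
since $2x\sqrt{\fd}=N$. This is exactly the recurrence of Theorem~\ref{thm:universality}.

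\emph{Step 3 (conclusion).} A second-order linear recurrence together with its two initial values determines the whole sequence. Strong induction on $L$ therefore gives $a(L)=b(L)$ for all $L\ge 0$.

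There is no genuine obstacle here. The only delicate point is the hypothesis $\fd>0$, which guarantees that $\sqrt{\fd}$ and $x$ are finite real numbers. When $\fd=0$ the formula degenerates, since $a(L)=N^L$ directly from the recurrence.

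As a cross-check, an alternative route goes through the eigenvalues. The roots of $\lambda^2-N\lambda+\fd$ from Lemma~\ref{lem:transfer-inv} are $\sqrt{\fd}\,(x\pm\sqrt{x^2-1})$. The closed form $U_L(\cos\theta)=\sin((L+1)\theta)/\sin\theta$, extended analytically, then reproduces the same expression. The recurrence argument above is cleaner, however, and it needs no separate treatment of the repeated-root case $x=1$.
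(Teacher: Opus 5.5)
Your proof is correct, but it takes a different route from the paper's. The paper states this theorem without proof, deferring to the companion paper. The same identity is, however, proved in the case $d=2$ of Theorem~\ref{thm:cheb-threshold}(a), and that argument goes through the eigenvalues. It solves the recurrence explicitly as $a(L)=(\lambda_1^{L+1}-\lambda_2^{L+1})/(\lambda_1-\lambda_2)$ with $\lambda_{1,2}=\sqrt{\fd}\,(x\pm\sqrt{x^2-1})$. It then substitutes $x=\cosh\beta$ (or $x=\cos\theta$) and invokes $U_L(\cosh\beta)=\sinh((L+1)\beta)/\sinh\beta$. This requires three separate subcases $x>1$, $x=1$, $x<1$.

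You instead show that $b(L)=(\sqrt{\fd})^L U_L(x)$ satisfies $b(L)=N\,b(L-1)-\fd\,b(L-2)$ with the same initial values as in Theorem~\ref{thm:universality}, and conclude by induction. Your route is uniform in $x$ and needs no case split. It uses only the defining recurrence of $U_L$, with no closed form. This recurrence-level observation is also what the paper uses, in the forward direction, in Part~(b): there it converts a putative Chebyshev form into the recurrence with $N'=2x\sqrt{\fd}$.

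What the eigenvalue route buys is an explicit closed form. That makes the asymptotics of $a(L)$ immediate, and likewise the Fibonacci identity $U_L(3/2)=F(2L+2)$ via $\cosh^{-1}(3/2)=2\ln\phi$. Both arguments rest equally on Theorem~\ref{thm:universality} for the recurrence and for $a(0)=1$, $a(1)=N$.
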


\begin{theorem}[Scaling law and Fibonacci bisection~{\cite[Theorems~5.2,~5.4]{Moj2026}}]
\label{thm:scaling}
\label{thm:fibonacci}
For all odd primes~$p$: $a_{\mathrm{add}}(L) = p^L \cdot a_{\mathrm{dbl}}(L)$.
For base-$3$ doubling, $\fd = 1$, $x = 3/2$, and
$a_{\mathrm{dbl}}(L) = U_L(3/2) = F(2L + 2)$,
the Fibonacci bisection
(OEIS~A001906, \cite{Sloane2025,Koshy2001}).
\end{theorem}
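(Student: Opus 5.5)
The statement to prove is Theorem~\ref{thm:scaling}: the scaling law $a_{\mathrm{add}}(L) = p^L a_{\mathrm{dbl}}(L)$, together with the Fibonacci identity for base-$3$ doubling. Everything will go through Theorem~\ref{thm:universality}. By that theorem, any binary-state system's cascade-free count is determined by $(N,\fd)$ through the recurrence $a(L) = N a(L-1) - \fd\, a(L-2)$ with $a(0)=1$, $a(1)=N$.

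\textbf{Scaling law.} I will read off the parameters of the two systems. For addition, the parameters are $(N_{\mathrm{add}},\fd_{\mathrm{add}})$. For doubling, the count is normalized so that the characteristic roots are scaled by $1/p$, i.e.\ $(N_{\mathrm{dbl}},\fd_{\mathrm{dbl}}) = (N_{\mathrm{add}}/p,\ \fd_{\mathrm{add}}/p^2)$. In the companion framework this is the statement that the doubling chain's Gen/Prop/Kill classes have sizes proportional to those of addition, with the digit-pair alphabet of size $p^2$ collapsing to $p$. The key step is then elementary. If $b(L)$ satisfies $b(L) = N' b(L-1) - \fd' b(L-2)$, then $p^L b(L)$ satisfies the recurrence with $(pN', p^2\fd')$. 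The initial values also match: $p^0 b(0) = 1$ and $p\,b(1) = pN'$. By uniqueness of solutions of a second-order linear recurrence, $a_{\mathrm{add}}(L) = p^L a_{\mathrm{dbl}}(L)$. Equivalently, Theorem~\ref{thm:chebyshev} shows $x = N/(2\sqrt{\fd})$ is invariant under $(N,\fd)\mapsto(pN,p^2\fd)$, while $(\sqrt{\fd})^L$ picks up the factor $p^L$.

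\textbf{Main obstacle.} The hard part is verifying the parameter relation $(N_{\mathrm{add}},\fd_{\mathrm{add}}) = (p N_{\mathrm{dbl}}, p^2 \fd_{\mathrm{dbl}})$ for every odd prime $p$. This requires the precise Gen/Prop/Kill counts of both systems, which are defined in~\cite{Moj2026} and not here. I would first compute $t$ and $g$ for each system directly from the digit rules and check that $\fd = tg$ scales by $p^2$. Oddness of $p$ should enter through the size of the propagate class, presumably $t = 1$ for doubling, where the unique middle digit $(p-1)/2$ exists only for $p$ odd.

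\textbf{Fibonacci bisection.} For base $3$ doubling, $N=3$ and $\fd=1$, so $a(L) = 3a(L-1) - a(L-2)$ with $a(0)=1$ and $a(1)=3$. Theorem~\ref{thm:chebyshev} with $x = 3/2$ gives $a(L) = U_L(3/2)$. The even-index Fibonacci numbers satisfy $F(2n+2) = 3F(2n) - F(2n-2)$, which follows from $F(m+2) = F(m+1)+F(m)$ applied twice. Their initial values are $F(2)=1$ and $F(4)=3$. The recurrences and initial values agree, so $a_{\mathrm{dbl}}(L) = F(2L+2)$ by induction.
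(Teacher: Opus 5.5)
\textbf{There is a gap in the scaling law.} Note first that the paper gives no proof of this theorem; it is imported from the companion paper, and the only hint is that the Fibonacci identity follows from Binet's formula and $\cosh^{-1}(3/2)=2\ln\phi$.

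Your reduction via Theorem~\ref{thm:universality} is correct. A solution $b(L)$ of the $(N',\fd')$ recurrence turns into a solution $p^L b(L)$ of the $(pN',p^2\fd')$ recurrence with matching initial values. But the relation $(N_{\mathrm{add}},\fd_{\mathrm{add}})=(pN_{\mathrm{dbl}},p^2\fd_{\mathrm{dbl}})$ is the entire content of the scaling law, and you never establish it. You first present it as a ``normalization'' of the doubling count, which assumes the conclusion: $a_{\mathrm{dbl}}$ is an honest count, not a rescaled one. You then defer it as the ``main obstacle''. Similarly, in the Fibonacci part you take $\fd=1$ for base-$3$ doubling as given, although it is part of the statement.

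The missing step is a short Gen/Prop/Kill census.
\begin{itemize}
\item \emph{Base-$p$ addition.} The alphabet is the set of digit pairs $(a,b)$, so $N=p^2$. Gen ($a+b\ge p$) has $\binom{p}{2}$ elements, Prop ($a+b=p-1$) has $p$, and Kill ($a+b\le p-2$) has $\binom{p}{2}$. Hence $\fd_{\mathrm{add}}=p\binom{p}{2}=p^2(p-1)/2$.
\item \emph{Base-$p$ doubling.} The alphabet is single digits, so $N=p$. Gen ($2a\ge p$) has $(p-1)/2$ elements, Prop ($2a=p-1$) has exactly one, namely the digit $(p-1)/2$, which exists because $p$ is odd, and Kill has $(p-1)/2$. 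Hence $\fd_{\mathrm{dbl}}=(p-1)/2$.
\end{itemize}
Each class is exactly $p$ times smaller for doubling, so $N$ scales by $p$ and $\fd=gt$ by $p^2$. Your recurrence argument (or the invariance of $x=N/(2\sqrt{\fd})$, valid since $\fd_{\mathrm{dbl}}\ge 1$) then finishes. At $p=3$ the census gives $g=t=\kl=1$, hence $\fd=1$ and $x=3/2$; this is the derivation your Fibonacci paragraph needs.

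Only oddness is used; primality plays no role. Oddness is also necessary. For $p=2$ one has $t_{\mathrm{dbl}}=0$, so $\fd_{\mathrm{dbl}}=0$ while $\fd_{\mathrm{add}}=2$. Then $a_{\mathrm{add}}(2)=16-2=14\ne 16=2^2\,a_{\mathrm{dbl}}(2)$.

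\textbf{The Fibonacci part is correct, by a different route.} You match $F(2n+2)=3F(2n)-F(2n-2)$, with $F(2)=1$ and $F(4)=3$, against $a(L)=3a(L-1)-a(L-2)$, $a(0)=1$, $a(1)=3$. The paper instead points to $U_L(\cosh\beta)=\sinh((L+1)\beta)/\sinh\beta$ at $\beta=2\ln\phi$, combined with Binet's formula and $\phi^2-\phi^{-2}=\sqrt5$. Your route is more elementary and self-contained. The paper's route explains why the evaluation point $3/2$ is $\cosh(2\ln\phi)$, and it fits the hyperbolic parametrization used in Theorem~\ref{thm:cheb-threshold}(a).
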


\section{The Holte Spectrum}
\label{sec:spectrum}

\begin{lemma}[Uniformity of $S \bmod N$]
\label{lem:uniform}
For any $k \ge 1$ and any integer $c$, $(S+c) \bmod N \sim \mathrm{Uniform}\{0,\ldots,N-1\}$.
\end{lemma}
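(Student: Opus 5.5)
The plan is induction on $k$, with $S = d_1 + \cdots + d_k$ where the $d_i$ are independent and uniform on $\{0,\ldots,N-1\}$. The one structural fact needed is that adding an independent uniform residue modulo~$N$ to \emph{any} integer-valued random variable gives a uniform residue. The statement is then immediate.

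First I handle $k = 1$. Here $S = d_1$ is uniform on $\{0,\ldots,N-1\}$. The map $s \mapsto (s + c) \bmod N$ is a bijection of $\mathbb{Z}/N\mathbb{Z}$, namely translation by $c$. A bijection pushes the uniform measure forward to the uniform measure, so $(d_1 + c) \bmod N$ is uniform.

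For general $k$, I write $S + c = d_k + (S' + c)$ with $S' = d_1 + \cdots + d_{k-1}$, which is independent of $d_k$. I condition on the value $m$ of $S' + c$. Given this value, $(S + c) \bmod N = (d_k + m) \bmod N$. By the $k=1$ case applied with the integer $m$ in place of $c$, this conditional distribution is uniform. Averaging a uniform conditional law over the distribution of $m$ gives a uniform law, so $(S+c) \bmod N$ is uniform.

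In fact this shows more than the lemma asks: the induction hypothesis is never used, only the independence of the last digit. An equivalent formulation is that $\Pr[(S+c) \equiv r \pmod N] = \sum_m \Pr[S'+c = m]\cdot \Pr[d_k \equiv r - m \pmod N] = \sum_m \Pr[S'+c=m]\cdot \tfrac1N = \tfrac1N$. Alternatively, one can argue via characters: for $\omega = e^{2\pi i \ell/N}$ with $\ell \not\equiv 0$, we have $\mathbb{E}[\omega^{S+c}] = \omega^c \prod_i \mathbb{E}[\omega^{d_i}] = 0$, since $\sum_{d=0}^{N-1}\omega^d = 0$.

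There is no genuine obstacle. The only care needed is to state independence explicitly, and to note that the conclusion holds for every integer $c$, including negative $c$ and $c \ge N$, because reduction modulo $N$ is translation on $\mathbb{Z}/N\mathbb{Z}$.
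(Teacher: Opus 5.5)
Your proof is correct, but your main argument takes a different route from the paper's. You condition on $S'+c$ with $S'=d_1+\cdots+d_{k-1}$ and use only that the single digit $d_k$ is uniform and independent of $S'$. This gives $\Pr[S+c\equiv r \pmod N]=\sum_m \Pr[S'+c=m]\cdot\frac1N=\frac1N$ directly.

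The paper instead argues on the Fourier side. Each nontrivial coefficient $\frac1N\sum_{d=0}^{N-1}\omega^{\ell d}$ (with $\omega=e^{2\pi i/N}$, $\ell\not\equiv 0$) vanishes as a full geometric sum. Independence makes the coefficients of $S \bmod N$ products of these, and Fourier inversion then gives uniformity; the shift by $c$ is handled at the end. This is exactly the character computation you offer as your alternative.

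Your conditioning route is more elementary, since it needs no inversion step. It also makes transparent the stronger fact you point out: one exactly uniform summand, independent of the others, suffices, and the remaining summands are arbitrary. The Fourier route gives the same generality, since a product vanishes as soon as one factor does.

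The Fourier route has one advantage: it extends quantitatively to non-uniform digit distributions, where no summand is exactly uniform. There the bound $\bigl|\Pr[S+c\equiv r]-\frac1N\bigr|\le\max_{\ell\not\equiv 0}\prod_i\bigl|\mathbb{E}[\omega^{\ell d_i}]\bigr|$ still controls the distance to uniformity, and the conditioning argument cannot see this.

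The induction framing in your write-up is superfluous, as you note yourself; the one-step conditioning argument is the whole proof.
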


\begin{proof}
Let $\omega = e^{2\pi i/N}$. The discrete Fourier coefficient of $\Pr(d_i \equiv r \pmod N)$
at frequency $m \neq 0$ is $\frac{1}{N}\sum_{d=0}^{N-1}\omega^{md} = 0$, since the sum is
a full geometric series. By independence, the same vanishing holds for $S \bmod N$.
The inverse DFT then recovers the uniform distribution. Adding the constant $c$ merely
shifts all residues, leaving the uniform distribution invariant.
\end{proof}

\begin{theorem}[Holte matrix spectrum]
\label{thm:holte-spectrum}
For $k$-summand base-$N$ addition, the Holte transfer matrix $T \in \mathbb{R}^{k \times k}$
satisfies:
\begin{enumerate}[label=(\roman*)]
\item The eigenvalues are $\{N^{-j} : j = 0,\ldots,k-1\}$, all simple.
\item The stationary distribution is $\pi_i = A(k,i)/k!$, where $A(k,i)$ are the Eulerian
numbers.
\item The eigenvectors are independent of~$N$.
\end{enumerate}
\end{theorem}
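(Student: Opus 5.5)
The plan is to prove all three parts at once by exhibiting a basis in which $T$ is triangular with diagonal entries $N^{-j}$. For that I work with the incoming-carry-conditioned statistics of the outgoing carry.

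\textbf{Step 1 (triangular action).} Fix the incoming carry $c$. The key identity is
\[
S + c = N c' + \rho, \qquad \rho := (S+c) \bmod N .
\]
By Lemma~\ref{lem:uniform}, $\rho$ is uniform on $\{0,\ldots,N-1\}$. For $m \ge 0$, consider the functions $f_m(c) := \mathbb{E}\bigl[\binom{c'}{m} \,\big|\, c\bigr]$, or equivalently the moments $\mathbb{E}[(c')^m \mid c]$. The claim is that $c \mapsto \mathbb{E}[(c')^m \mid c]$ is a polynomial in $c$ of degree $m$ with leading coefficient $N^{-m}$.

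To see this, write $c' = (S + c - \rho)/N$ and expand $(S+c-\rho)^m$. The obstacle is that $\rho$ is not independent of $S+c$, so the expansion does not factor directly. I would instead use a polynomial-exactness identity of the following type: for any polynomial $P$ of degree $m$,
\[
\sum_{r=0}^{N-1} P\!\left(\frac{n-r}{N}\right)\mathbf{1}[\,n \equiv r\,]
\]
can be expressed as $\frac{1}{N}\sum_{r} P\bigl((n-r)/N\bigr)$ plus periodic corrections. Handling this carefully is the main technical step.

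The cleaner route, which I would try first, is Holte's: use the left-action formulation with the count matrix and prove directly that $T$ maps the space $V_m$ of polynomials of degree $\le m$ in $c$ (restricted to $\{0,\ldots,k-1\}$) into itself, acting on $c^m$ by $N^{-m}c^m + (\text{lower order})$. Concretely, the function $g(c) = \mathbb{E}[h(c') \mid c]$ for $h(c') = \binom{c'+k-1-i}{k-1}$-type bases can be computed via the generating function
\[
\sum_{c'} x^{c'}\, T[c',c] = N^{-k}\sum_s B_k(s)\, x^{\lfloor (s+c)/N\rfloor}.
\]
Applying the $N$-section to $\prod_i (1-y^N)/(1-y)$ gives an explicit formula in which the $c$-dependence is polynomial of degree $m$ with leading factor $N^{-m}$.

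\textbf{Step 2 (eigenvalues).} With respect to the basis $1, c, \ldots, c^{k-1}$ of functions on the $k$ states, the adjoint $T^{\top}$ is upper triangular with diagonal $1, N^{-1}, \ldots, N^{-(k-1)}$. These $k$ values are distinct because $N \ge 2$, so they are exactly the eigenvalues of $T$, and each is simple. This proves~(i).

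\textbf{Step 3 (independence of $N$).} I expect this to be the hardest part. The off-diagonal triangular entries do depend on $N$, so triangularity alone does not yield~(iii). I would use commutation instead. Let $T_N$ denote the matrix for base $N$. Composing two base-$N$ and base-$M$ digit additions is the same as one base-$NM$ addition, so $T_N T_M = T_{NM}$ (this is Holte's argument). Hence all the $T_N$ commute. A family of commuting matrices, each having simple spectrum, shares a common eigenbasis; therefore the eigenvectors of $T_N$ coincide with those of $T_2$ and do not depend on $N$. This proves~(iii). The main point to verify is the multiplicativity $T_N T_M = T_{NM}$: a pair of digits in base $N$ and base $M$ corresponds to a uniform digit in base $NM$, and the carries compose correctly.

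\textbf{Step 4 (stationary distribution).} Since the eigenvector for eigenvalue $1$ is independent of $N$, I may compute it in the limit $N \to \infty$. In that limit, the digits rescaled by $N$ become i.i.d.\ uniform on $[0,1]$, and the carry chain becomes $c' = \lfloor U_1 + \cdots + U_k + \{\text{previous fractional part}\}\rfloor$. The stationary law of the carry is then the law of $\lfloor U_1 + \cdots + U_k \rfloor$, because the fractional part is uniform and independent. The probability $\Pr(\lfloor U_1 + \cdots + U_k \rfloor = i) = A(k,i)/k!$ is the classical Eulerian volume identity (Laplace/Stanley slicing of the hypercube). This gives~(ii).
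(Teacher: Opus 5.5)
Your Step~1 has a genuine gap, and everything else rests on it: Step~2 reads the spectrum off it, Step~3 needs the resulting simple spectrum, and Step~4 needs Step~3. You correctly observe that $\rho=(S+c)\bmod N$ is not independent of $S$. But you then only gesture at a ``polynomial-exactness identity'' and an $N$-section computation, and carry out neither. Do not fill this by borrowing the paper's expansion. Substituting $S-R=NC'-c$ reintroduces $\mathbb{E}[(C')^j]$ in the $m=j$ term, and the expansion collapses to the tautology $\mathbb{E}[(C')^j]=\mathbb{E}[(C')^j]$. So it says nothing about the $c$-dependence once $j\ge 2$.

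The missing idea is a higher-moment version of Lemma~\ref{lem:uniform}. For $0\le a\le k-1$ and every residue $\sigma$,
\[
  \mathbb{E}\bigl[S^a\,\mathbf{1}\{S\equiv\sigma \pmod{N}\}\bigr]=\frac{\mathbb{E}[S^a]}{N}.
\]
The reason is that $\mathbb{E}[S^a e^{2\pi i rS/N}]$ is the $a$-th derivative of $\psi(t)^k$ at $t=2\pi i r/N$, where $\psi(t)=\frac1N\sum_{d=0}^{N-1}e^{td}$. For $r\not\equiv 0$, $\psi$ vanishes at these points, so $\psi^k$ vanishes there to order $k>a$. Consequently $\mathbb{E}[S^a\rho^e]=\mathbb{E}[S^a]\,\mathbb{E}[U^e]$ for $a\le k-1$, all $e\ge 0$ and all integers $c$, with $U$ uniform on $\{0,\dots,N-1\}$. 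Expanding $(S+c-\rho)^m$ then gives, for $m\le k-1$,
\[
  \mathbb{E}\bigl[(c')^m\mid c\bigr]=N^{-m}\,\mathbb{E}\bigl[(S+c-U)^m\bigr]\qquad(U\text{ independent of }S),
\]
which is visibly a polynomial in $c$ with leading term $N^{-m}c^m$. Only $m\le k-1$ is needed on a $k$-point state space, and that is exactly the range where this decoupling holds, so drop ``for $m\ge 0$''. Alternatively, you can bypass Step~1 by exhibiting $k$ explicit $N$-independent eigenvectors with eigenvalues $N^{-j}$, as Proposition~\ref{prop:stirling-eulerian} does via Worpitzky's identity.

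With Step~1 repaired, your other steps are sound and differ from the paper. Step~3 is a self-contained proof of (iii). The paper's triangular recursion has $N$-dependent coefficients, as you note, and it ultimately defers to Holte and Diaconis--Fulman. Your identity $T_NT_M=T_{NM}$ needs only $\lfloor (a+\rho_0/N)/M\rfloor=\lfloor a/M\rfloor$ for integers $a$ and $0\le\rho_0<N$. If you want the labelled form (the $N^{-j}$-eigenline is the same for every $N$), add one remark: the common eigenfunction of exact degree $j$ in $c$ has eigenvalue $N^{-j}$ for every $N$, by the leading-coefficient computation above.

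Step~4 replaces the paper's Worpitzky computation by a limit, but your limiting chain is misdescribed. No fractional part is carried over, and adding an independent uniform one would produce the Eulerian law for $k+1$ summands on $\{0,\dots,k\}$. What actually happens is that the incoming carry contributes only $c/N\to 0$, so every column of $T_N$ tends to $(A(k,c')/k!)_{c'}$. Apply this to the stationary vector $\pi^*$. It is the same for all $N$, because it spans the unique common eigenline with nonzero coordinate sum, so $\pi^*=\lim_N T_N\pi^*=(A(k,c')/k!)_{c'}$. This route is more conceptual than the paper's proof of (ii). However, it depends on (iii) and on the Eulerian volume identity, whereas the paper's computation is purely algebraic and independent of (i) and (iii).
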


\begin{proof}
\noindent\textit{Setup.}
Let $d_1,\ldots,d_k \sim \mathrm{Uniform}\{0,\ldots,N-1\}$ be independent, $S = \sum_{i=1}^k d_i$,
and $C' = \lfloor(S+c)/N\rfloor$ for incoming carry $c \in \{0,\ldots,k-1\}$.

\noindent\textit{Part~(ii)---stationary distribution.}
We prove $T\pi = \pi$ for $\pi_c = A(k,c)/k!$ by establishing, for each
$c' \in \{0,\ldots,k-1\}$, the identity
\begin{equation}
\label{eq:stationarity-id}
\sum_{c=0}^{k-1} A(k,c) \sum_{s=c'N-c}^{(c'+1)N-c-1} B_k(s) = N^k A(k,c').
\end{equation}
Dividing by $k!\, N^k$ this is $\sum_c \pi_c T[c',c] = \pi_{c'}$, i.e., stationarity.

The proof uses two classical identities.

\medskip
\noindent\textit{Worpitzky identity~\cite[Prop.~1.4.4]{Stanley1997}}: For any $n \in \mathbb{Z}_{\ge 0}$,
\begin{equation}
\label{eq:worpitzky}
n^k = \sum_{c=0}^{k-1} A(k,c)\binom{n+k-1-c}{k}.
\end{equation}

\noindent\textit{Eulerian number formula~\cite[§6.5]{Comtet1974}}: For each $c' \in \{0,\ldots,k-1\}$,
\begin{equation}
\label{eq:euler-explicit}
A(k,c') = \sum_{j=0}^{c'}(-1)^j\binom{k+1}{j}(c'+1-j)^k.
\end{equation}

\noindent\textit{Step~1: Sum over an interval of $B_k$.}
By inclusion-exclusion, $B_k(s) = \sum_{j \ge 0}(-1)^j\binom{k}{j}\binom{s-jN+k-1}{k-1}$
(terms with $s-jN < 0$ vanish). Applying the hockey-stick identity
$\sum_{s=a}^{a+N-1}\binom{s+m}{m} = \binom{a+N+m}{m+1} - \binom{a+m}{m+1}$ with $m = k-1$
and $a = c'N - c - jN$:
\[
  \sum_{s=c'N-c}^{(c'+1)N-c-1} B_k(s)
  = \sum_{j=0}^{c'}(-1)^j\binom{k}{j}\!
    \left[\binom{(c'{-}j{+}1)N{-}c{+}k{-}1}{k} - \binom{(c'{-}j)N{-}c{+}k{-}1}{k}\right],
\]
where the upper limit $j \le c'$ follows since $\binom{(c'-j)N-c+k-1}{k} = 0$
for $(c'-j)N - c + k - 1 < k$, i.e.\ $j > c'$.

\noindent\textit{Step~2: Sum over $c$ with weights $A(k,c)$.}
Apply~\eqref{eq:worpitzky} with $n = mN$ (for $m \in \mathbb{Z}_{\ge 0}$):
$\sum_c A(k,c)\binom{mN-c+k-1}{k} = (mN)^k$.
Swapping the order of summation and applying this to each bracket in Step~1:
\begin{align*}
\sum_{c=0}^{k-1} A(k,c) \sum_{s=c'N-c}^{(c'+1)N-c-1}B_k(s)
&= \sum_{j=0}^{c'}(-1)^j\binom{k}{j}\!
   \bigl[((c'-j+1)N)^k - ((c'-j)N)^k\bigr] \\
&= N^k\sum_{j=0}^{c'}(-1)^j\binom{k}{j}
   \bigl[(c'-j+1)^k - (c'-j)^k\bigr].
\end{align*}

\noindent\textit{Step~3: Reduction to the Eulerian formula.}
Set $i = c'-j$ (so $j = c'-i$). The right side of Step~2 becomes
\[
N^k\sum_{i=0}^{c'}(-1)^{c'-i}\binom{k}{c'-i}\bigl[(i+1)^k - i^k\bigr].
\]
We show this equals $N^k A(k,c')$.  Splitting $(i+1)^k - i^k$ and using the
Eulerian formula~\eqref{eq:euler-explicit}, it suffices to verify that
\begin{equation}
\label{eq:step3-target}
\sum_{i=0}^{c'}(-1)^{c'-i}\binom{k}{c'-i}\bigl[(i+1)^k - i^k\bigr]
= \sum_{i=0}^{c'}(-1)^{c'-i}\binom{k+1}{c'-i}(i+1)^k.
\end{equation}
To see this, expand the right side using
$\binom{k+1}{c'-i} = \binom{k}{c'-i} + \binom{k}{c'-i-1}$:
\[
\text{RHS of~\eqref{eq:step3-target}}
= \underbrace{\sum_{i=0}^{c'}(-1)^{c'-i}\binom{k}{c'-i}(i+1)^k}_{=:\,\Sigma_1}
+ \underbrace{\sum_{i=0}^{c'}(-1)^{c'-i}\binom{k}{c'-i-1}(i+1)^k}_{=:\,\Sigma_2}.
\]
Re-index $\Sigma_2$ by $m = i+1$ (so $i = m-1$ and $c'-i-1 = c'-m$):
\[
  \Sigma_2 = \sum_{m=1}^{c'+1}(-1)^{c'-m+1}\binom{k}{c'-m}\,m^k
  = -\sum_{m=0}^{c'}(-1)^{c'-m}\binom{k}{c'-m}\,m^k,
\]
where the $m = 0$ term vanishes (since $0^k = 0$ for $k \ge 1$) and the
$m = c'+1$ term vanishes (since $\binom{k}{-1} = 0$).  Combining:
\[
  \Sigma_1 + \Sigma_2
  = \sum_{i=0}^{c'}(-1)^{c'-i}\binom{k}{c'-i}\bigl[(i+1)^k - i^k\bigr],
\]
which is the left side of~\eqref{eq:step3-target}.
This proves~\eqref{eq:stationarity-id} and hence $T\pi = \pi$.

\textit{Part~(i)---eigenvalues.}
We work with $T$ as a linear operator on $\mathcal{F} = \{f:\{0,\ldots,k-1\}\to\mathbb{R}\}$,
writing $(Tf)(c) = E[f(C')\mid C=c]$.  The monomial basis $\{1, c, c^2, \ldots, c^{k-1}\}$
spans $\mathcal{F}$ (the evaluation Vandermonde matrix is invertible). We prove:

\smallskip
\noindent\textit{Claim}: For each $j\in\{0,\ldots,k-1\}$,
$(Tc^j)(c) = N^{-j}c^j + p_{j-1}(c)$
where $p_{j-1}$ is a polynomial of degree $\le j-1$.
\smallskip

In other words, $T$ acts on the polynomial filtration
$\mathcal{P}_0\subset\mathcal{P}_1\subset\cdots\subset\mathcal{P}_{k-1} = \mathcal{F}$
(where $\mathcal{P}_j = \mathrm{span}\{1,c,\ldots,c^j\}$) with each $\mathcal{P}_j$
invariant and with leading eigenvalue $N^{-j}$ on $\mathcal{P}_j/\mathcal{P}_{j-1}$.
A lower-triangular matrix with distinct diagonal entries is diagonalizable, giving
eigenvalues $\{N^{-j}\}_{j=0}^{k-1}$, all simple (distinct since $N\ge 2$).

\smallskip
\noindent\textit{Proof of Claim for $j=1$}: Write $R := (S+c)\bmod N$.
By Lemma~\ref{lem:uniform}, $R \sim \mathrm{Uniform}\{0,\ldots,N-1\}$ with $E[R] = (N-1)/2$,
independently of $c$.  Since $E[S] = k(N-1)/2$:
\begin{align}
(Tc)(c) &= E[C'\mid C=c] = \frac{E[S+c] - E[R]}{N}
          = \frac{k(N-1)/2 + c - (N-1)/2}{N} \notag \\
         &= \frac{c}{N} + \frac{(k-1)(N-1)}{2N}.
\label{eq:first-moment}
\end{align}
The leading coefficient in $c$ is $N^{-1}$.  \checkmark

\smallskip
\noindent\textit{Proof of Claim for general $j$}: We use the decomposition
$C' = (S+c-R)/N$ to write
\[
(Tc^j)(c) = E\!\left[\!\left(\frac{S+c-R}{N}\right)^{\!j}\right]
= N^{-j}\sum_{m=0}^{j}\binom{j}{m}c^{j-m}E\!\left[(S-R)^m\right].
\]
The term $m=0$ contributes $N^{-j}c^j$.  For $m\ge 1$: note that
$S - R = (S+c-R) - c = NC' - c$, so
\[
E[(S-R)^m] = E[(NC'-c)^m]
= \sum_{\ell=0}^{m}\tbinom{m}{\ell}N^{\ell}(-c)^{m-\ell}E[C'^\ell].
\]
Each $E[C'^\ell] = (Tc^\ell)(c)$ is itself a polynomial in $c$ of degree $\le\ell$ by the
induction hypothesis (for $\ell < j$).  The term $\ell = m = j$ is self-referential:
it involves $(Tc^j)(c)$, the quantity being computed.  However, the induction targets
only the \emph{leading coefficient} of $(Tc^j)(c)$, and the $\ell = m = j$ term does not
contribute to it: the coefficient of $c^m$ in $E[(NC'-c)^m]$ is
$\sum_{\ell=0}^{m}\binom{m}{\ell}(-1)^{m-\ell}N^{\ell}\cdot N^{-\ell} = (1-1)^m = 0$
for $m \ge 1$, so no $m \ge 1$ term contributes a $c^j$-monomial.
The induction is therefore closed: the leading term $N^{-j}c^j$ is determined by
the $m = 0$ contribution alone, and the lower-order terms involve only
$(Tc^\ell)(c)$ for $\ell < j$, which are known by hypothesis.
Therefore $E[(S-R)^m]$ is a polynomial in $c$
of degree $\le m$, and the contribution of the $m$-th term to $(Tc^j)(c)$ has degree
$\le (j-m)+m = j$, and the $j$-th degree contribution arises solely from $m=0$.
Hence $(Tc^j)(c) = N^{-j}c^j + (\text{degree} \le j-1)$, completing the induction.

\textit{Part~(iii)---$N$-independence of eigenvectors.}
By the Claim, $T$ acts on the polynomial filtration
$\mathcal{P}_0 \subset \mathcal{P}_1 \subset \cdots \subset \mathcal{P}_{k-1}$
with each $\mathcal{P}_j$ invariant, and the induced action on
$\mathcal{P}_j / \mathcal{P}_{j-1}$ is scalar multiplication by~$N^{-j}$.
Since the eigenvalues $N^{-j}$ are pairwise distinct (for $N \ge 2$), the
eigenspace for $N^{-j}$ is one-dimensional and lies in~$\mathcal{P}_j$.
Let $v_j \in \mathcal{P}_j$ be the unique monic polynomial of degree~$j$
with $Tv_j = N^{-j} v_j$.  Writing $v_j = c^j + \sum_{m < j} a_m c^m$,
the eigenequation determines each $a_m$ by
\[
  (N^{-m} - N^{-j})\, a_m
  = -\bigl[\text{coefficient of $c^m$ in }
    (T - N^{-j}\mathrm{Id})\bigl(c^j + \textstyle\sum_{\ell > m} a_\ell\, c^\ell\bigr)\bigr],
\]
a triangular recursion descending from $m = j{-}1$ to $m = 0$.
The right-hand side involves $a_{m+1}, \ldots, a_{j-1}$ (known by induction)
and the off-diagonal entries of $T$ in the monomial basis---that is, the
lower-degree parts of $(Tc^n)(c)$ for $n \le j$.
Diaconis and Fulman~\cite{DiaconisFulmanJACO} identify the right
eigenvectors of $T$ with the Eulerian idempotents of the descent algebra
of~$\fS_k$, and the left eigenvectors with the Foulkes character table
of~$\fS_k$, both of which are defined purely in terms of the symmetric
group~$\fS_k$ and do not involve~$N$.
(Holte~\cite[Theorem~3]{Holte1997} gave the original proof of
$N$-independence by direct computation of the eigenvectors.)
In particular, the biorthogonal system $(u_j, v_j)$ of
Propositions~\ref{prop:stirling-eulerian} and~\ref{prop:right-ev} is
$N$-independent.
\end{proof}

\begin{remark}[Non-reversibility for $k \ge 4$]
\label{rem:non-reversibility}
For $k = 2$ and $k = 3$ the carry chain satisfies detailed balance
$\pi_c T[c',c] = \pi_{c'} T[c,c']$, so $T$ is self-adjoint in $L^2(\pi)$.
For $k \ge 4$ this fails: direct computation for
$k = 4$, $N = 2$ gives $\pi_0 T[1,0] = 10/384 \ne 11/384 = \pi_1 T[0,1]$.
The carry chain is not reversible for $k \ge 4$.
The proofs of Theorem~\ref{thm:holte-spectrum}(i) and~(iii)
do not require self-adjointness.
\end{remark}

\begin{table}[ht]
\centering
\caption{Holte matrix spectral data (illustrative).}
\label{tab:holte-spec}
\small
\begin{tabular}{@{}cllc@{}}
\toprule
$k$ & $\pi \cdot k!$ & Eulerian numbers $A(k,\,\cdot)$ & Eigenvalues \\
\midrule
$3$ & $[1,4,1]$        & $[1,4,1]$          & $1,\, N^{-1},\, N^{-2}$ \\
$4$ & $[1,11,11,1]$    & $[1,11,11,1]$      & $1,\, N^{-1},\, N^{-2},\, N^{-3}$ \\
$5$ & $[1,26,66,26,1]$ & $[1,26,66,26,1]$   & $1,\, N^{-1},\ldots,\, N^{-4}$ \\
\bottomrule
\end{tabular}
\end{table}

\begin{corollary}[Eigenvalue multiset as complete invariant]
\label{cor:eigenvalue-complete}
The multiset $\{N^{-j}\}_{j=0}^{k-1}$ determines the conjugacy class of $T$ uniquely.
\end{corollary}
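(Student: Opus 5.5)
The plan is to use the standard fact that a diagonalizable matrix is determined up to similarity by its eigenvalue multiset. The content of the corollary is therefore that any matrix with spectrum $\{N^{-j}\}_{j=0}^{k-1}$ is similar to $T$. Concretely, I will compare $T$ with the diagonal matrix $D := \operatorname{diag}(1, N^{-1}, \ldots, N^{-(k-1)})$ and show that every $k\times k$ matrix $M$ whose characteristic polynomial is $\prod_{j=0}^{k-1}(\lambda - N^{-j})$ is similar to $D$. Since $T$ itself is such a matrix by Theorem~\ref{thm:holte-spectrum}(i), this gives $M \sim T$.

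The key steps, in order, are as follows.
\begin{enumerate}[label=(\alph*)]
\item Since $N \ge 2$, the numbers $N^{-j}$ for $0\le j\le k-1$ are pairwise distinct. This is exactly the simplicity statement of Theorem~\ref{thm:holte-spectrum}(i).
\item A matrix with $k$ distinct eigenvalues has $k$ linearly independent eigenvectors, because eigenvectors for distinct eigenvalues are independent. Hence $M = P D P^{-1}$, with the columns of $P$ being eigenvectors ordered by decreasing eigenvalue. Equivalently, the minimal polynomial of $M$ equals its characteristic polynomial and is squarefree, so the Jordan form is diagonal.
\item Applying (b) to both $M$ and $T$ gives $M \sim D \sim T$. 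Conversely, similar matrices share characteristic polynomials, so the multiset is an invariant. Together these show it is a complete invariant for the class of such matrices.
\end{enumerate}
For $T$ specifically, I can also point to the triangular structure on the polynomial filtration $\mathcal{P}_0\subset\cdots\subset\mathcal{P}_{k-1}$ from the proof of Theorem~\ref{thm:holte-spectrum}. That structure already exhibits the diagonalization explicitly.

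I expect no real obstacle. The only subtle point is to state the scope correctly. The multiset determines the conjugacy class over $\mathbb{C}$, and in fact over $\mathbb{R}$ or $\mathbb{Q}$, because the eigenvalues are rational and $P$ can be chosen with entries in $\mathbb{Q}$. It does not determine $T$ entrywise. Simplicity is essential here: a repeated eigenvalue would allow nontrivial Jordan blocks, and this is where distinctness for $N\ge 2$ is used.
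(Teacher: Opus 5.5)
Your proposal is correct and rests on the same fact as the paper's proof: for $N \ge 2$ the eigenvalues are simple, so $T$ is diagonalizable and its eigenvalue multiset fixes its similarity class. Your version is a bit cleaner, because the paper also reads off $(k,N)$ from $\lambda_0/\lambda_1$ and cites the $N$-independence of the eigenvectors (Theorem~\ref{thm:holte-spectrum}(iii)), and neither is needed for the similarity statement. One small caveat: your remark about similarity over $\mathbb{Q}$ presupposes that the comparison matrix $M$ has rational entries.
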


\begin{proof}
The ratio $\lambda_0/\lambda_1 = N$ and the cardinality $k$ are read off directly.
Since all eigenvalues are simple and the eigenvectors are $N$-independent
(Theorem~\ref{thm:holte-spectrum}(iii)), the multiset fixes the conjugacy class.
\end{proof}

\section{The Biorthogonal Eigenvector System}
\label{sec:eigenvectors}

Theorem~\ref{thm:holte-spectrum}(iii) establishes $N$-independence of the eigenvectors
but does not give their explicit form. The following two propositions supply the
complete biorthogonal eigenvector system.
Holte~\cite{Holte1997} gave entry-wise formulae for both left and right
eigenvectors; Diaconis and Fulman~\cite{DiaconisFulmanJACO} identified the left
eigenvectors with the Foulkes character table of~$\fS_k$ and the right
eigenvectors with Loday's Eulerian idempotents, giving new closed
forms and computing carry correlations
(see also Novelli--Thibon~\cite{NovelliThibon2012}
for a treatment via noncommutative symmetric functions).
The entry-wise left eigenvector formula is equivalent
to a result of Foulkes~\cite{Foulkes1980}.
The generating-function packaging as a product of Stirling numbers,
binomial shifts, and Eulerian polynomials
(Proposition~\ref{prop:stirling-eulerian}) follows from these known
entry-wise formulae by standard generating-function algebra;
the self-contained proof below establishes the eigenvector equation
via Worpitzky's identity~\eqref{eq:worpitzky} and determines the
Stirling normalization constants from the spectral expansion of the
carry return probability.
The binomial-palindromic characterization of the right eigenvectors,
including the structure of the quotient polynomials~$Q_j$ and the
closed forms for $Q_2$ and~$Q_3$
(Proposition~\ref{prop:right-ev}, Proposition~\ref{prop:Qj-closed}),
appears not to have been observed previously.

\subsection{Left eigenvectors: Stirling--Eulerian factorization}

\begin{proposition}[Stirling--Eulerian factorization of left eigenvectors]
\label{prop:stirling-eulerian}
For $k$-summand base-$N$ addition, the left eigenvector $u_j$
($j = 0,\ldots,k{-}1$) of the Holte matrix $T$ associated with eigenvalue
$N^{-j}$ has generating function
\begin{equation}
\label{eq:left-ev-gf}
\sum_{i=0}^{k-1} u_j[i]\, x^i \;=\; \frac{(-1)^j\,|s(k,\,k{-}j)|}{k!}
\;\cdot\;(x-1)^j\;\cdot\; A_{k-j}(x),
\end{equation}
where $|s(k,m)|$ is the unsigned Stirling number of the first kind and
$A_n(x) = \sum_{i=0}^{n-1} A(n,i)\,x^i$ is the $n$-th Eulerian polynomial.
Entry-wise:
\[
u_j[i] \;=\; \frac{|s(k,\,k{-}j)|}{k!} \;\sum_{m=0}^{j}
\binom{j}{m}\,(-1)^m\, A(k{-}j,\; i{-}m),
\]
where $A(n, i) = 0$ for $i < 0$ or $i \geq n$.
In particular: $u_0[i] = A(k,i)/k!$ is the stationary distribution,
$u_1 = -\binom{k}{2}/k! \cdot [\Delta A_{k-1}]$
(the first finite difference of the $(k{-}1)$-st Eulerian numbers), and
$u_{k-1}[i] = (-1)^i\binom{k{-}1}{i}/k$.
\end{proposition}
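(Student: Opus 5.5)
The plan is to verify the left eigenvector equation directly for the proposed vector, reusing the machinery of the stationarity proof in Theorem~\ref{thm:holte-spectrum}(ii), and then to fix the scalar $c_{k,j}$ separately. Eigenvectors are determined only up to a constant, so the normalization $|s(k,k-j)|/k!$ is a convention. I will pin it down through the spectral expansion of the return probability $T^L[0,0]$.

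\emph{Step 1: the equivalence of the two formulas.} Write $w_j[i] := \sum_{m}\binom{j}{m}(-1)^m A(k-j,i-m)$, so that $\sum_i w_j[i]x^i = (1-x)^j A_{k-j}(x)$. This is immediate from the binomial theorem, and it is why the generating-function form \eqref{eq:left-ev-gf} (with its sign $(-1)^j$) and the entry-wise form agree. The special cases $j=0,1,k-1$ follow by substitution, using $A_1(x)=1$ and $|s(k,1)|=(k-1)!$.

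\emph{Step 2: a generalized Worpitzky identity.} I claim
\[
F_j(y):=\sum_{i} w_j[i]\binom{y+k-1-i}{k} = y^{k-j}.
\]
Convolving with $(1-x)^j$ applies the backward difference $\nabla^j$ in the shift variable. Since $\nabla^j\binom{y+k-1-c}{k}=\binom{y+k-1-j-c}{k-j}$ by Pascal's rule, $F_j(y)=\sum_c A(k-j,c)\binom{y+(k-j)-1-c}{k-j}$. This equals $y^{k-j}$ by Worpitzky \eqref{eq:worpitzky} with $k-j$ in place of $k$.

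\emph{Step 3: the eigen-equation.} Exactly as in Steps~1--2 of the stationarity proof, I insert the inclusion--exclusion/hockey-stick expression for the interval sums of $B_k$ into $\sum_c w_j[c]\,T[c',c]$, swap summations, and apply Step~2 with $y=mN$. This gives
\[
\sum_c w_j[c]\,T[c',c]=N^{-j}\sum_{i=0}^{c'}(-1)^{c'-i}\binom{k}{c'-i}\bigl[(i+1)^{k-j}-i^{k-j}\bigr].
\]
Multiplying by $x^{c'}$ and summing, the convolution with $\binom{k}{\cdot}(-1)^{\cdot}$ contributes $(1-x)^k$. The bracket contributes $(1-x)\sum_i (i+1)^{k-j}x^i = (1-x)\,A_{k-j}(x)/(1-x)^{k-j+1}$, using the standard Eulerian generating function. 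The product is $(1-x)^jA_{k-j}(x)$, which is $N^{j}$ times nothing extra. Hence $w_jT=N^{-j}w_j$.

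I expect this bookkeeping to be the main obstacle: the truncation $j\le c'$ and the boundary terms must be checked to be harmless for exponent $k-j<k$. It is cleanest to run everything at the generating-function level, with all sums extended over $i\ge 0$, so that the final identity is an identity of formal power series that truncates automatically because $\deg\bigl[(1-x)^jA_{k-j}\bigr]=k-1$.

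\emph{Step 4: normalization.} Normalize the right eigenvectors by $v_j[0]=1$ and require biorthogonality $u_j\cdot v_{j'}=\delta_{jj'}$. Then $T^L[0,0]=\sum_j N^{-jL}\,v_j[0]\,u_j[0]=\sum_j N^{-jL}u_j[0]$.

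On the other hand, $T^L[0,0]$ is the probability that $k$ uniform $L$-digit numbers have sum $<M:=N^L$. This equals $\binom{M+k-1}{k}/M^k=\frac{1}{k!}\,M^{-k}\prod_{i=0}^{k-1}(M+i)=\sum_j \frac{|s(k,k-j)|}{k!}M^{-j}$. Since the functions $L\mapsto N^{-jL}$ are linearly independent, $u_j[0]=|s(k,k-j)|/k!$. Because $w_j[0]=A(k-j,0)=1$, we get $u_j = \frac{|s(k,k-j)|}{k!}\,w_j$, which is the statement.
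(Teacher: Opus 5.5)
Your proposal is correct and follows the paper's proof essentially step for step. Both use the backward-difference vector $w_j$, the generalized Worpitzky identity obtained from $\nabla^j$ and Pascal's rule, the inclusion--exclusion/hockey-stick reduction to $G(c')$, and the normalization via the return count $\binom{M+k-1}{k}$ and the rising factorial. The one difference is that you identify $G(c')$ with $w_j[c']$ through $\sum_{n\ge 0}(n+1)^{\nu}x^n = A_\nu(x)/(1-x)^{\nu+1}$, whereas the paper uses the explicit Eulerian-number formula, $j$-fold differenced with Vandermonde--Chu. This is the same identity in coefficient form, and it makes the boundary terms you were worried about take care of themselves.
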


\begin{proof}
We verify~\eqref{eq:left-ev-gf} by checking $u_j T = N^{-j}\,u_j$.
Write $\nu = k{-}j$ and $w_j[c] = \sum_{m=0}^{j}\binom{j}{m}(-1)^m A(\nu,\,c{-}m)$
(the $j$-th backward difference of $A(\nu,\cdot)$), so that
$u_j = c_{k,j}\,w_j$ with $c_{k,j} = |s(k,k{-}j)|/k!$.
The eigenvalue equation $u_j T = N^{-j}\,u_j$ is equivalent to
$w_j T = N^{-j}\,w_j$ (the scalar $c_{k,j}$ cancels). We prove
\begin{equation}\label{eq:gen-stationarity}
  \sum_{c=0}^{k-1} w_j[c]\!\sum_{s=c'N-c}^{(c'+1)N-c-1}\!B_k(s)
  = N^{k-j}\,w_j[c']
  \qquad\text{for each } c' \in \{0,\ldots,k{-}1\}.
\end{equation}

\noindent\textit{Step~1} (unchanged from Part~(ii) of Theorem~\ref{thm:holte-spectrum}):
\[
  \sum_{s=c'N-c}^{(c'+1)N-c-1}\!B_k(s)
  = \sum_{\ell=0}^{c'}(-1)^\ell\binom{k}{\ell}\!
    \Bigl[\binom{(c'{-}\ell{+}1)N{-}c{+}k{-}1}{k}
         - \binom{(c'{-}\ell)N{-}c{+}k{-}1}{k}\Bigr].
\]

\noindent\textit{Generalized Step~2.}
We claim: for any integer $q \ge 0$,
\begin{equation}\label{eq:gen-worpitzky}
  \sum_{c=0}^{k-1} w_j[c]\,\binom{qN{-}c{+}k{-}1}{k} = (qN)^\nu.
\end{equation}
Expanding $w_j[c]$ and substituting $c \to c{+}m$ in the inner sum:
\[
\text{LHS} = \sum_{c=0}^{k-j-1} A(\nu,c)\,
\underbrace{\sum_{m=0}^{j}\binom{j}{m}(-1)^m\binom{qN{-}c{-}m{+}k{-}1}{k}}_
{= \binom{qN{-}c{+}\nu{-}1}{\nu}},
\]
where the inner sum equals $\nabla^j_n\binom{n+k{-}1{-}c}{k}\big|_{n=qN}
= \binom{qN{-}c{+}\nu{-}1}{\nu}$ by iterating Pascal's identity.
Then~\eqref{eq:worpitzky} with parameter~$\nu$ at $n = qN$ yields~\eqref{eq:gen-worpitzky}.

Swapping the order of summation in~\eqref{eq:gen-stationarity} and
applying~\eqref{eq:gen-worpitzky} to each bracket:
\[
  \text{LHS of \eqref{eq:gen-stationarity}}
  = N^\nu \sum_{\ell=0}^{c'}(-1)^\ell\binom{k}{\ell}
    \bigl[(c'{-}\ell{+}1)^\nu - (c'{-}\ell)^\nu\bigr]
  =: N^\nu\,G(c').
\]

\noindent\textit{Generalized Step~3.}
We show $G(c') = w_j[c']$.
The Eulerian formula~\eqref{eq:euler-explicit} with parameter~$\nu$ gives
$A(\nu,c') = \sum_\ell (-1)^\ell\binom{\nu+1}{\ell}(c'{+}1{-}\ell)^\nu$.
Applying $\nabla^j$ in $c'$ and using the Vandermonde--Chu identity
$\sum_{m=0}^{j}\binom{j}{m}\binom{\nu+1}{\ell-m} = \binom{k+1}{\ell}$:
\[
  w_j[c'] = \nabla^j A(\nu,c')
  = \sum_{\ell=0}^{c'}(-1)^\ell\binom{k+1}{\ell}(c'{+}1{-}\ell)^\nu.
\]
Decomposing $\binom{k+1}{\ell} = \binom{k}{\ell} + \binom{k}{\ell-1}$
and re-indexing the second sum by $\ell' = \ell{-}1$:
\[
  w_j[c'] = \sum_\ell(-1)^\ell\binom{k}{\ell}(c'{+}1{-}\ell)^\nu
           - \sum_{\ell'}(-1)^{\ell'}\binom{k}{\ell'}(c'{-}\ell')^\nu
  = G(c').
\]
This proves~\eqref{eq:gen-stationarity} and hence $w_j T = N^{-j}\,w_j$.

\medskip
\noindent\textit{Normalization.}
Since all eigenvalues $N^{-j}$ are simple, the left eigenspace for
each~$j$ is one-dimensional, and $w_j$ is determined up to a
scalar multiple.  The off-diagonal biorthogonality
$\langle w_j, v_\ell \rangle = 0$ for $j \ne \ell$
follows from eigenvalue separation:
$N^{-j}\langle w_j, v_\ell\rangle
= \langle w_j T, v_\ell\rangle
= \langle w_j, Tv_\ell\rangle
= N^{-\ell}\langle w_j, v_\ell\rangle$,
so $(N^{-j} - N^{-\ell})\langle w_j, v_\ell\rangle = 0$.
Because $\{v_0,\ldots,v_{k-1}\}$ is a basis of~$\mathbb{R}^k$
and $w_j \ne 0$, the inner product $\langle w_j, v_j \rangle$
cannot vanish (otherwise $w_j$ would be orthogonal to every
basis vector).  Hence there exists a unique scalar~$c_{k,j}$
such that $u_j := c_{k,j}\,w_j$ satisfies
$\langle u_j,\, v_j\rangle = 1$ with
$v_j[0] = 1$ (Proposition~\ref{prop:right-ev}).
We claim $c_{k,j} = |s(k,k{-}j)|/k!$.

\smallskip
\noindent\textit{Step~N1: counting the return probability.}
Let $T_{\mathrm{count}} = N^k T$ be the count matrix.
The $(0,0)$-entry of $T_{\mathrm{count}}^n$ counts $k$-tuples
$(a_1,\ldots,a_k) \in \{0,\ldots,N^n-1\}^k$ for which
the carry chain starts at $c_0 = 0$ and returns to $c_n = 0$
after $n$ digit positions. This is equivalent to the integer
constraint $a_1 + \cdots + a_k < N^n$, and stars-and-bars gives
\begin{equation}\label{eq:return-count}
  T_{\mathrm{count}}^n[0,0]
  = \binom{N^n + k - 1}{k}.
\end{equation}

\noindent\textit{Step~N2: spectral expansion.}
Since $v_j[0] = 1$ for all~$j$ and $w_j[0] = A(k{-}j,\,0) = 1$
(only the $m = 0$ term in the backward-difference formula survives at
$c = 0$, and $A(n,0) = 1$ for all $n \ge 1$),
the spectral projector
$E_j = v_j u_j^\top$ satisfies $E_j[0,0] = v_j[0]\,u_j[0]
= 1 \cdot c_{k,j}\,w_j[0] = c_{k,j}$.
The spectral expansion $T^n = \sum_j N^{-nj} E_j$ at
position~$(0,0)$ gives
\begin{equation}\label{eq:spectral-return}
  \sum_{j=0}^{k-1} c_{k,j}\, M^{-j}
  = \frac{\binom{M+k-1}{k}}{M^k},
  \qquad M := N^n.
\end{equation}

\noindent\textit{Step~N3: coefficient extraction.}
The right side of~\eqref{eq:spectral-return} factors as
\[
  \frac{M(M{+}1)\cdots(M{+}k{-}1)}{k!\,M^k}
  = \frac{1}{k!}\prod_{i=1}^{k-1}\!\Bigl(1 + \frac{i}{M}\Bigr)
  = \frac{1}{k!}\sum_{j=0}^{k-1}
    \frac{e_j(1,2,\ldots,k{-}1)}{M^j},
\]
where $e_j$ denotes the $j$-th elementary symmetric polynomial.
Since~\eqref{eq:spectral-return} holds for all $M = N^n$
($N \ge 2$, $n \ge 1$)---infinitely many values---and
both sides are polynomials in $M^{-1}$ of degree~$\le k{-}1$,
the identity is a polynomial identity.  Comparing coefficients
of $M^{-j}$:
\begin{equation}\label{eq:ckj-esym}
  c_{k,j} = \frac{e_j(1,2,\ldots,k{-}1)}{k!}.
\end{equation}

\noindent\textit{Step~N4: identification with Stirling numbers.}
Expanding the rising factorial
$x^{(k)} = x(x{+}1)\cdots(x{+}k{-}1)
= \sum_{m=0}^{k}|s(k,m)|\,x^m$ and dividing by~$x$:
\[
  \prod_{i=1}^{k-1}(x+i)
  = \sum_{m=1}^{k}|s(k,m)|\,x^{m-1}
  = \sum_{j=0}^{k-1}|s(k,k{-}j)|\,x^{k-1-j}.
\]
Since $\prod_{i=1}^{k-1}(x+i)
= \sum_{j=0}^{k-1} e_j(1,\ldots,k{-}1)\,x^{k-1-j}$
by the definition of elementary symmetric polynomials,
comparing coefficients of~$x^{k-1-j}$ gives
$e_j(1,\ldots,k{-}1) = |s(k,k{-}j)|$.
Substituting into~\eqref{eq:ckj-esym}:
\[
  c_{k,j} = \frac{|s(k,\,k{-}j)|}{k!}.
\]

\noindent\textit{Biorthogonality.}
The off-diagonal identity
$\langle u_j, v_\ell\rangle = 0$ for $j \ne \ell$
follows from eigenvalue separation (as above with $w_j$
replaced by $u_j$).  For the diagonal: the spectral
projector $E_j = v_j u_j^\top$ satisfies $E_j^2 = E_j$,
i.e.\ $(v_j u_j^\top)^2
= \langle u_j, v_j\rangle\, v_j u_j^\top = v_j u_j^\top$,
forcing $\langle u_j, v_j\rangle = 1$.
The identity $\sum_{j=0}^{k-1} E_j = I$ at position~$(0,0)$
gives the consistency check
\[
  \sum_{j=0}^{k-1} c_{k,j}
  = \sum_{j=0}^{k-1}\frac{|s(k,\,k{-}j)|}{k!}
  = \frac{1}{k!}\sum_{m=1}^{k}|s(k,m)|
  = \frac{x^{(k)}\big|_{x=1}}{k!} = 1.
\]
For $j \ge 1$, $\langle u_j, v_0\rangle = 0$ is also immediate since
$\bigl[(x{-}1)^j A_\nu(x)\bigr]_{x=1} = 0^j \cdot \nu! = 0$.
\end{proof}

\subsection{Right eigenvectors: binomial-palindromic characterization}

\begin{lemma}[Centrosymmetry of the Holte matrix]
\label{lem:centrosymmetry}
The Holte matrix satisfies $T[c',c] = T[k{-}1{-}c',\, k{-}1{-}c]$
for all $c,c' \in \{0,\ldots,k{-}1\}$.
\end{lemma}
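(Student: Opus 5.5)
The plan is to prove the identity by an explicit involution on digit tuples, working with the count matrix $T_{\mathrm{count}} = N^k T$. By~\eqref{eq:holte-entry}, $N^k T[c',c]$ counts the tuples $(d_1,\ldots,d_k)\in\{0,\ldots,N-1\}^k$ with $\lfloor (d_1+\cdots+d_k+c)/N\rfloor = c'$. I will use the digit complement $d_i \mapsto \bar d_i := N-1-d_i$. This map is a bijection of $\{0,\ldots,N-1\}^k$ onto itself, and it is the natural candidate: complementing all digits of both summands and carries is the classical symmetry $a \mapsto N^n-1-a$ of base-$N$ arithmetic.

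The key computation is as follows. Put $S=\sum d_i$ and $\bar S = \sum \bar d_i = k(N-1) - S$. Suppose $c'N \le S + c \le c'N + N - 1$. Then
\[
\bar S + (k-1-c) = kN - 1 - (S + c),
\]
and this quantity lies in the interval $[\,(k-1-c')N,\ (k-1-c')N + N - 1\,]$. To check this, substitute the two bounds on $S+c$:
\[
kN - 1 - (c'N+N-1) = (k-1-c')N, \qquad kN - 1 - c'N = (k-1-c')N + N - 1.
\]
Consequently $\lfloor(\bar S + (k-1-c))/N\rfloor = k-1-c'$.

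The argument then concludes in three short steps.

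1. The complement map sends tuples counted by $T_{\mathrm{count}}[c',c]$ to tuples counted by $T_{\mathrm{count}}[k-1-c',\,k-1-c]$.
2. The map is an involution, so the same argument applied in reverse gives a bijection between these two sets of tuples.
3. Equal counts give $T[c',c] = T[k-1-c',k-1-c]$ after dividing by $N^k$.

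Alternatively one could argue from the symmetry $B_k(s) = B_k(k(N-1)-s)$ of the convolution together with the reindexing $s \mapsto k(N-1)-s$ in the interval sum. That route is the same calculation in different form.

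I expect the only real obstacle to be bookkeeping at the interval endpoints, namely confirming that the reflected interval is exactly $[(k-1-c')N,\,(k-1-c')N+N-1]$ with the correct inclusive ends. No earlier result is needed beyond the entry formula~\eqref{eq:holte-entry}. The same calculation also confirms that the states $c, c' \in \{0,\ldots,k-1\}$ are carried into the same range by $c \mapsto k-1-c$.
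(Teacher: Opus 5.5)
Your proof is correct and uses the same digit-complement involution $d_i \mapsto N-1-d_i$ as the paper, which likewise reduces the claim to a reflection of the outgoing-carry floor. Your explicit check that $\bar S + (k-1-c) = kN-1-(S+c)$ lands in $[(k-1-c')N,\,(k-1-c')N+N-1]$ is actually a cleaner justification of that floor identity than the paper's parenthetical arithmetic.
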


\begin{proof}
The substitution $d_i \mapsto (N{-}1{-}d_i)$ bijects
$\{0,\ldots,N{-}1\}^k$ onto itself and sends
$S = \sum d_i$ to $k(N{-}1) - S$.
The outgoing carry transforms as
$\lfloor(S+c)/N\rfloor \mapsto
\lfloor(k(N{-}1) - S + c)/N\rfloor = k{-}1 - \lfloor(S + (k{-}1{-}c))/N\rfloor$
(using $k(N{-}1) + c = (k{-}1)N + (N{-}1{-}c) + kN - kN$
and the floor identity $\lfloor a + n\rfloor = n + \lfloor a\rfloor$
for $n \in \mathbb{Z}$).
Hence the transition from carry~$c$ to carry~$c'$ has the same
probability as the transition from $(k{-}1{-}c)$ to $(k{-}1{-}c')$.
\end{proof}

\begin{proposition}[Binomial-palindromic right eigenvectors]
\label{prop:right-ev}
The right eigenvector $v_j$ ($j = 0,\ldots,k{-}1$), normalized by $v_j[0] = 1$,
satisfies
\begin{equation}
\label{eq:right-ev-gf}
\sum_{i=0}^{k-1} \binom{k{-}1}{i}\, v_j[i]\, x^i
= (1+x)^{k-1-j}\, Q_j(x),
\end{equation}
where $Q_j(x)$ is a polynomial of degree $j$ with $Q_j(0) = 1$ and palindrome
symmetry $x^j\, Q_j(1/x) = (-1)^j\, Q_j(x)$.  The biorthogonality
$\sum_{i} u_m[i]\, v_j[i] = \delta_{mj}$ holds.
In particular: $v_0 = (1,\ldots,1)$,
$v_1[i] = (k{-}1{-}2i)/(k{-}1)$, and
$v_{k-1}[i] = (-1)^i / \binom{k{-}1}{i}$.
\end{proposition}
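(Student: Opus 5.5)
The plan is to use the structure already established in the proof of Theorem~\ref{thm:holte-spectrum}: the eigenvector $v_j$ lies in $\mathcal{P}_j$. Concretely, there is a polynomial $p_j$ of exact degree $j$ with $v_j[i]=p_j(i)$ for $i=0,\ldots,k-1$. Everything except the normalization and the explicit special cases will follow from this fact combined with the centrosymmetry of Lemma~\ref{lem:centrosymmetry}.

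\emph{Step 1 (the factor $(1+x)^{k-1-j}$).} I will expand $p_j$ in the falling-factorial basis, $p_j(i)=\sum_{m=0}^{j}\beta_m\, i^{\underline{m}}$. The identity $\binom{k-1}{i}\,i^{\underline m}=(k-1)^{\underline m}\binom{k-1-m}{i-m}$ gives
\[
\sum_{i}\binom{k-1}{i}\, i^{\underline m}\, x^i=(k-1)^{\underline m}\,x^m(1+x)^{k-1-m}.
\]
Summing over $m\le j$ yields
\[
\sum_i\binom{k-1}{i}v_j[i]x^i=(1+x)^{k-1-j}Q_j(x),
\qquad Q_j(x)=\sum_{m\le j}\beta_m(k-1)^{\underline m}x^m(1+x)^{j-m},
\]
so $\deg Q_j\le j$ and $Q_j(0)=p_j(0)=v_j[0]$.

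\emph{Step 2 (normalization is possible).} I need $v_j[0]\neq 0$, and I will prove it without assuming the normalization of Proposition~\ref{prop:stirling-eulerian}. Take arbitrary nonzero eigenvectors and form the projector $E_j=v_ju_j^\top/\langle u_j,v_j\rangle$. The return-count identity \eqref{eq:return-count} and Step~N3 still give $E_j[0,0]=|s(k,k-j)|/k!>0$. Hence $v_j[0]\neq0$, and we may set $v_j[0]=1$, which gives $Q_j(0)=1$.

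\emph{Step 3 (palindromy and exact degree).} Let $J$ be the reversal permutation. Lemma~\ref{lem:centrosymmetry} says $JTJ=T$, so $Jv_j$ is again an eigenvector for the simple eigenvalue $N^{-j}$. Therefore $Jv_j=\varepsilon v_j$ for some $\varepsilon$ with $\varepsilon^2=1$. Since $p_j(k-1-i)$ is a polynomial in $i$ whose leading coefficient is $(-1)^j$ times that of $p_j$, comparing leading coefficients forces $\varepsilon=(-1)^j$. Translating $v_j[k-1-i]=(-1)^jv_j[i]$ to generating functions gives
\[
x^{k-1}F(1/x)=(-1)^jF(x),\qquad F(x):=\sum_i\binom{k-1}{i}v_j[i]x^i .
\]
The factor $(1+x)^{k-1-j}$ is itself palindromic, so $x^jQ_j(1/x)=(-1)^jQ_j(x)$. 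In particular the $x^j$-coefficient of $Q_j$ is $\pm Q_j(0)=\pm1$, so $\deg Q_j=j$ exactly. Biorthogonality $\langle u_m,v_j\rangle=\delta_{mj}$ is then exactly what was established in the proof of Proposition~\ref{prop:stirling-eulerian}, now with the normalization $v_j[0]=1$ in hand.

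\emph{Step 4 (special cases).} For $j=0$, the vector $v_0=\mathbf 1$ is immediate from column-stochasticity. For $j=1$, $v_1$ is affine in $i$, antisymmetric under $i\mapsto k-1-i$, and has $v_1[0]=1$; this forces $v_1[i]=1-2i/(k-1)$. For $j=k-1$, I will verify directly that $v[i]=(-1)^i/\binom{k-1}{i}$ satisfies $\langle u_m,v\rangle=0$ for all $m<k-1$. Since $v[0]=1$, this identifies it as $v_{k-1}$, and its generating polynomial is $Q_{k-1}(x)=(1-x)^{k-1}$. The plan is to write $1/\binom{k-1}{i}=k\int_0^1t^i(1-t)^{k-1-i}\,dt$ and substitute $x=-t/(1-t)$ into the generating function \eqref{eq:left-ev-gf}. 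This reduces the orthogonality to vanishing of alternating Eulerian sums of the form $\sum_i(-1)^iA(n,i)/\binom{n-1}{i}$ for $n=k-m\ge2$. I expect this reduction, together with a proof of that vanishing, to be the main obstacle. I would first check small cases (for $n=2,3$ the sums are $1-1$ and $1-2+1$, both zero), and then seek a proof through the Worpitzky/beta-integral representation or through the symmetry $A(n,i)=A(n,n-1-i)$ combined with a telescoping argument.
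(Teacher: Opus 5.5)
\paragraph{Assessment.} Your Steps 1--3 are correct and follow the paper's own route: polynomiality of $v_j$ from the filtration of Theorem~\ref{thm:holte-spectrum}, combined with centrosymmetry. The falling-factorial expansion is a tidier replacement for the paper's derivative-at-$x=-1$ argument. Your Step~2 and your exact-degree remark also fill in two points the paper takes for granted: that $v_j[0]\neq 0$, and that $\deg Q_j=j$ rather than merely $\le j$. The cases $j=0$ and $j=1$ are fine.

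\paragraph{The gap: the case $j=k-1$.} The paper's proof never verifies the listed special cases, so there is nothing there to lean on. Your plan stops exactly at the identity
\[
\sum_{i=0}^{n-1}\frac{(-1)^i A(n,i)}{\binom{n-1}{i}}=0,\qquad 2\le n\le k,
\]
which you leave unproved. The symmetry $A(n,i)=A(n,n-1-i)$ disposes only of even $n$, where the summand is odd under $i\mapsto n-1-i$. For odd $n$ the summand is symmetric and the symmetry gives nothing. For instance, at $n=5$ the sum is $1-\tfrac{26}{4}+\tfrac{66}{6}-\tfrac{26}{4}+1=0$, and this needs a genuine argument.

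\paragraph{How to close it.} Your beta-integral reduction works once you insert Frobenius' expansion $A_n(x)=\sum_{r=1}^{n} r!\,S(n,r)\,(x-1)^{n-r}$, where $S(n,r)$ are the Stirling numbers of the second kind. At $x=-t/(1-t)$ one has $x-1=-1/(1-t)$, hence
\[
\sum_{i=0}^{n-1}\frac{(-1)^iA(n,i)}{\binom{n-1}{i}}
= n\int_0^1(1-t)^{n-1}A_n\Bigl(\frac{-t}{1-t}\Bigr)\,dt
= n\sum_{r=1}^{n}(-1)^{n-r}(r-1)!\,S(n,r).
\]
The identity $\sum_{r}(-1)^{r-1}(r-1)!\,S(n,r)=\delta_{n,1}$ is the coefficient form of $\log\bigl(1+(e^t-1)\bigr)=t$. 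So the sum vanishes for every $n\ge 2$, and your orthogonality argument then identifies $v_{k-1}$.

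\paragraph{A separate error.} Your aside $Q_{k-1}(x)=(1-x)^{k-1}$ is wrong. The vector $v[i]=(-1)^i/\binom{k-1}{i}$ gives $\sum_i\binom{k-1}{i}v[i]x^i=\sum_{i=0}^{k-1}(-x)^i$, so
\[
Q_{k-1}(x)=\frac{1-(-x)^k}{1+x};
\]
compare $Q_4$ in Table~\ref{tab:eigenvectors-k5}. The polynomial $(1-x)^{k-1}$ would instead correspond to $v[i]=(-1)^i$. The aside is not used in your argument, but it should be corrected.
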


\begin{proof}
The centrosymmetry $T[i,j] = T[k{-}1{-}i,\, k{-}1{-}j]$
(Lemma~\ref{lem:centrosymmetry}) implies that if
$v_j$ is an eigenvector, then so is the reversal
$\bar{v}_j[i] := v_j[k{-}1{-}i]$.  Since eigenvalues are simple,
$\bar{v}_j = \alpha_j v_j$ for some scalar $\alpha_j$; applying the
reversal twice gives $\alpha_j^2 = 1$, so $\alpha_j = \pm 1$.  The sign
$(-1)^j$ follows from the polynomial degree of $v_j$ (which is $j$,
established in the proof of Theorem~\ref{thm:holte-spectrum}).

The factorization~\eqref{eq:right-ev-gf} is equivalent to the statement
that the binomial-weighted GF $P(x) := \sum_i \binom{k{-}1}{i} v_j[i]\, x^i$
has a zero of order $k{-}1{-}j$ at $x = -1$.
By the polynomial filtration of Theorem~\ref{thm:holte-spectrum},
$v_j[i]$ is a polynomial in~$i$ of degree exactly~$j$.
For any $r < k{-}1{-}j$, the product $i^{\underline{r}}\,v_j[i]$
is a polynomial in~$i$ of degree $j + r < k{-}1$, and the
$(k{-}1)$-th finite difference of any polynomial of degree~$< k{-}1$
vanishes:
$\sum_{i=0}^{k-1}\binom{k{-}1}{i}(-1)^i\,p(i) = 0$ whenever
$\deg p < k{-}1$.
Since $P^{(r)}(-1) = r!\sum_i \binom{k{-}1}{i}(-1)^{i-r}
\binom{i}{r}\,v_j[i]$ and $\binom{i}{r} = i^{\underline{r}}/r!$,
we obtain $P^{(r)}(-1) = 0$ for all $r < k{-}1{-}j$.
Hence $(1+x)^{k-1-j} \mid P(x)$, giving
the factorization with $\deg Q_j = j$.

The palindrome property of $Q_j$ follows: the reversal symmetry
$v_j[i] = (-1)^j v_j[k{-}1{-}i]$ translates, under the binomial
weighting, to $x^{k-1} Q_j(1/x)(1 + 1/x)^{k-1-j} =
(-1)^j Q_j(x)(1+x)^{k-1-j}$, which simplifies to
$x^j Q_j(1/x) = (-1)^j Q_j(x)$.

Biorthogonality is verified directly: if $u_m^\top T = \lambda_m u_m^\top$ and
$T v_j = \lambda_j v_j$ with $\lambda_m \ne \lambda_j$, then
$\lambda_m (u_m \cdot v_j) = u_m^\top T v_j = \lambda_j (u_m \cdot v_j)$,
forcing $u_m \cdot v_j = 0$.  (This holds for any diagonalizable matrix with
distinct eigenvalues and does not require self-adjointness.)
The normalization $\sum_i u_m[i] v_j[i] = \delta_{mj}$
follows from the biorthogonality and the chosen scaling conventions.
\end{proof}

\begin{remark}[Structural asymmetry and Eulerian idempotent projections]
\label{rem:asymmetry}
The left eigenvectors admit a clean factorization into three classical
objects (Stirling numbers, binomial coefficients, Eulerian polynomials).
Representation-theoretically, the left eigenvectors form the
Foulkes character table of~$\fS_k$, and the right eigenvectors correspond
to the Eulerian idempotents of the descent
algebra~\cite{DiaconisFulmanJACO, Foulkes1980}; the
factorization~\eqref{eq:left-ev-gf} is the generating-function form
of this correspondence.

More precisely, the right eigenvector~$v_j$ is the image of the
$j$-th Eulerian idempotent $e_j \in \mathbb{Q}[\fS_k]$ under the
natural action of~$\fS_k$ on the carry state
space~\cite{DiaconisFulmanJACO}.
The quotient polynomial~$Q_j$ of~\eqref{eq:right-ev-gf} therefore
encodes how this idempotent projects onto the binomial basis of the
state space: $Q_j$ is the \emph{binomial projection polynomial}
of the $j$-th Eulerian idempotent.
The convergence $Q_j^{(k)}(x) \to (1-x)^j$ as $k \to \infty$
(Remark~\ref{rem:Qj-rate}) shows that these projections stabilize:
the Eulerian idempotent projection approaches the alternating
binomial $(1-x)^j$ as the number of summands grows, at rate
$O(1/k)$ for fixed~$j$.
The closed forms for $Q_2$ and $Q_3$
(Proposition~\ref{prop:Qj-closed}) give the first explicit
instances of this stabilization with computable error terms.

The right eigenvectors have no analogous uniform closed form for
all indices $2 \le j \le k{-}2$, since the polynomials $Q_j(x)$ are
determined by biorthogonality, which requires inverting the full
left eigenvector matrix.
For $j \le 3$, however, the palindromic constraint leaves at most one
free parameter, and the biorthogonality equation can be solved in
closed form (Proposition~\ref{prop:Qj-closed}).
For $j \ge 4$, the palindromic constraint leaves
$\lfloor j/2 \rfloor$ free parameters; an explicit formula
analogous to those below is not available.
\end{remark}

\begin{proposition}[Closed forms for $Q_2$ and $Q_3$]
\label{prop:Qj-closed}
\leavevmode
\begin{itemize}
\item[\emph{(i)}]
For $k \ge 4$, the quotient polynomial $Q_2^{(k)}$ is
\begin{equation}\label{eq:Q2-closed}
  Q_2^{(k)}(x) \;=\; \frac{(3k{-}1)(1+x^2) - 2(3k{-}5)\,x}{3k{-}1}.
\end{equation}
\item[\emph{(ii)}]
For $k \ge 5$, the quotient polynomial $Q_3^{(k)}$ is
\begin{equation}\label{eq:Q3-closed}
  Q_3^{(k)}(x) \;=\; -\,\frac{(x-1)\bigl(k\,x^2 - 2(k{-}4)\,x + k\bigr)}{k}.
\end{equation}
\end{itemize}
\end{proposition}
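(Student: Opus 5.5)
The plan is to use Proposition~\ref{prop:right-ev} to cut each $Q_j$ down to one unknown, and then fix that unknown by a single biorthogonality relation against a left eigenvector from Proposition~\ref{prop:stirling-eulerian}.

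For $j=2$, palindromy with sign $+1$ and $Q_2(0)=1$ force $Q_2(x)=1+\beta x+x^2$. For $j=3$, the sign is $-1$, so $Q_3(-x)$-type reasoning gives $Q_3(1)=0$. Hence $Q_3(x)=(1-x)(1+\gamma x+x^2)$, again with one unknown. In both cases the right eigenvector is then determined by the identity
\[\textstyle\binom{k-1}{i}v_j[i]=[x^i]\,(1+x)^{k-1-j}Q_j(x).\]

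To find $\beta$, I would use the biorthogonality $\langle u_0,v_2\rangle=0$, i.e.\ $\sum_i A(k,i)\,v_2[i]=0$. This condition is linear in $\beta$. Since $v_2[i]$ is a quadratic polynomial in $i$, a more convenient form is $\sum_i A(k,i)\,p(i)=0$ for an explicit quadratic $p$. To get $p$, I expand $(1+x)^{k-3}(1+\beta x+x^2)$ coefficientwise:
\[\binom{k-3}{i}+\beta\binom{k-3}{i-1}+\binom{k-3}{i-2},\]
and divide by $\binom{k-1}{i}$. This gives
\[\frac{(k-1-i)(k-2-i)+\beta\, i(k-1-i)+i(i-1)}{(k-1)(k-2)}.\]

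The sum then only requires the first two moments of the Eulerian distribution $\pi_i=A(k,i)/k!$. These are known:
\[E[i]=\frac{k-1}{2},\qquad \operatorname{Var}(i)=\frac{k+1}{12}.\]
Both can be derived from the symmetry of $A(k,\cdot)$ together with $A_k''(1)$, or else from the descent-statistic interpretation.

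Substituting, and writing $E[i(k-1-i)]=\frac{(k-1)^2}{4}-\frac{k+1}{12}=\frac{(3k-5)(k-2)}{12}$, gives
\[E[(k-1-i)(k-2-i)]=E[i(i-1)]=\frac{(k-1)^2}{4}+\frac{k+1}{12}-\frac{k-1}{2}=\frac{(3k-1)(k-2)}{12}.\]
The linear equation is therefore
\[2\cdot\frac{(3k-1)(k-2)}{12}+\beta\,\frac{(3k-5)(k-2)}{12}=0,\]
so $\beta=-2(3k-1)/(3k-5)$.

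This does not match the stated value $-2(3k-5)/(3k-1)$. The stated value is the reciprocal-type quantity, so I must recheck which relation is actually imposed. The likely culprit is the relation itself. Since $v_2$ is a degree-2 polynomial in $i$, it is automatically orthogonal to $u_j$ for $j\ge3$, and orthogonality to $u_0$ is one genuine condition. I will also use $\langle u_1,v_2\rangle=0$, which holds automatically by the reversal symmetry, as a consistency check, and carefully recompute the moment $E[i(k-1-i)]$.

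The main obstacle is getting this moment computation, and the matching of $\beta$, exactly right, for example by testing at $k=4$ against the explicit $4\times4$ Holte matrix.

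For $Q_3$, I proceed in the same way. The vector $v_3[i]$ is cubic in $i$ and antisymmetric about $(k-1)/2$. It is therefore automatically orthogonal to $u_0$ and $u_2$, since these are symmetric. The only nontrivial condition is $\langle u_1,v_3\rangle=0$. Because $u_1\propto\Delta A_{k-1}$, summation by parts converts this into the vanishing of a sum of a quadratic polynomial against $A(k-1,\cdot)$. That again needs only the first two Eulerian moments, now for $k-1$. Solving the resulting linear equation for $\gamma$ should give $\gamma=-2(k-4)/k$. Finally I would check that the answer is consistent with the limit $(1-x)^j$ and with the explicit vector $v_{k-1}$ in the boundary cases $k=4$ and $k=5$.
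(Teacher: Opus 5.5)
Your plan is sound, but as written it does not prove (i). You arrive at $\beta=-2(3k-1)/(3k-5)$, which contradicts the statement, and then attribute the mismatch to the relation. The relation $\langle u_0,v_2\rangle=0$ is correct, and since the coefficient of $\beta$ is nonzero, its unique root is the true value. The actual error is that you swapped the two Eulerian moments when simplifying. Your intermediate expressions are right, but
\[
E[i(k-1-i)]=\frac{(k-1)^2}{4}-\frac{k+1}{12}=\frac{3k^2-7k+2}{12}=\frac{(3k-1)(k-2)}{12},\qquad E[i(i-1)]=\frac{3k^2-11k+10}{12}=\frac{(3k-5)(k-2)}{12}.
\]
For example, at $k=4$, $\pi=(1,11,11,1)/24$ gives $E[i(3-i)]=11/6$ and $E[i(i-1)]=7/6$. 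The equation is therefore $2(3k-5)+\beta(3k-1)=0$, which gives exactly $\beta=-2(3k-5)/(3k-1)$. Your value would give $v_2=(1,-\tfrac57,-\tfrac57,1)$ at $k=4$, which is plainly not orthogonal to $(1,11,11,1)$.

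Part (ii) is only asserted ("should give"), but your method does close. Write $Q_3=1+ax-ax^2-x^3$ with $a=\gamma-1$, and set $h=(k-1)/2$, $y=i-h$. Then $(k-1)(k-2)(k-3)\,v_3[i]=2(a-1)y^3-2[(3+a)h^2-6h+2]\,y$. Sum its forward difference against $A(k-1,\cdot)$, whose mean is $h-\tfrac12$ and variance $(2h+1)/12$. This gives $(a-1)(h+1)=2(3+a)h^2-12h+4$. The coefficient of $a$ here is $-(2h+1)(h-1)\neq 0$ for $k\ge 4$, and the solution is $a=-(3k-8)/k$, i.e.\ $\gamma=-2(k-4)/k$.

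With these corrections, your route is a genuine alternative to the paper's. The paper fixes the free parameter by the \emph{diagonal} normalization $\langle u_j,v_j\rangle=1$. It uses that $u_j$ annihilates polynomials of degree $<j$, so only the leading coefficient of $v_j[i]$ enters, together with the Stirling constant $c_{k,j}$. You instead use an \emph{off-diagonal} orthogonality ($u_0$ for $j=2$, $u_1$ for $j=3$). This needs only the Eulerian mean and variance, so your argument is independent of the normalization constants.
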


\begin{proof}
We prove both parts by a unified argument.
Write $Q_j(x) = \sum_{\ell=0}^{j} q_\ell\,x^\ell$ with $q_0 = 1$
and palindrome symmetry $x^j Q_j(1/x) = (-1)^j Q_j(x)$, which
fixes all coefficients in terms of a single free parameter~$\alpha$
when $j \le 3$.  Biorthogonality $\langle u_j, v_j \rangle = 1$
then determines~$\alpha$.

\smallskip\noindent\emph{Step~1: $v_j[i]$ as a polynomial in~$i$.}
From~\eqref{eq:right-ev-gf},
\[
  v_j[i] = \frac{1}{\binom{k{-}1}{i}}
  \sum_{n=0}^{j} q_n\,\binom{k{-}1{-}j}{i{-}n}.
\]
Each ratio $\binom{k{-}1{-}j}{i{-}n}\big/\binom{k{-}1}{i}$
is a polynomial in~$i$ of degree exactly~$j$
(a product of $j$ linear factors divided by
$(k{-}1)(k{-}2)\cdots(k{-}j)$),
so $v_j[i]$ is a polynomial of degree~$\le j$ in~$i$.

\smallskip\noindent\emph{Step~2: annihilation by~$u_j$.}
The generating function $U_j(x) = \sum_i u_j[i]\,x^i$
has a zero of order~$j$ at $x = 1$
(since $U_j(x) = c_{k,j}\,(x{-}1)^j\,A_{k-j}(x)$
with $A_{k-j}(1) = (k{-}j)! \ne 0$).
Hence $\sum_i u_j[i]\,i^m = 0$ for every $m < j$,
and $v_j[i]$ being a polynomial of degree~$\le j$ means only
the leading coefficient of~$v_j[i]$ contributes to
$\langle u_j, v_j \rangle$.

\smallskip\noindent\emph{Step~3: the leading moment.}
The $j$-th derivative gives
$U_j^{(j)}(1) = j!\,c_{k,j}\,(k{-}j)!$, where
$c_{k,j} = (-1)^j\,|s(k,k{-}j)|/k!$.
Since $i^j = i(i{-}1)\cdots(i{-}j{+}1) +{}$lower-order terms,
\[
  \sum_i u_j[i]\,i^j = U_j^{(j)}(1) = j!\,c_{k,j}\,(k{-}j)!.
\]

\smallskip\noindent\emph{Step~4: $j = 2$.}
With $Q_2(x) = 1 + bx + x^2$ (palindromic), a direct expansion gives
$v_2[i] = 1 + (b{-}2)\,i(k{-}1{-}i)/[(k{-}1)(k{-}2)]$.
The coefficient of~$i^2$ is $-(b{-}2)/[(k{-}1)(k{-}2)]$.
Step~3 yields $\sum_i u_2[i]\,i^2 = 2\,c_{k,2}\,(k{-}2)!$
with $c_{k,2} = (3k{-}1)\,k(k{-}1)(k{-}2)/[24\,k!]$.
Combining:
\[
  \langle u_2, v_2 \rangle
  = \frac{-(b{-}2)}{(k{-}1)(k{-}2)}\;\cdot\; 2\,c_{k,2}\,(k{-}2)!
  = \frac{-(b{-}2)(3k{-}1)}{12(k{-}1)}.
\]
Setting this equal to~$1$ and solving for~$b$ gives
$b = -2(3k{-}5)/(3k{-}1)$, which is~\eqref{eq:Q2-closed}.

\smallskip\noindent\emph{Step~5: $j = 3$.}
With $Q_3(x) = 1 + ax - ax^2 - x^3$ (anti-palindromic, $q_0 = 1$),
the coefficient of~$i^3$ in the numerator of~$v_3[i]$ is
$2(a{-}1)$, giving leading coefficient
$2(a{-}1)/[(k{-}1)(k{-}2)(k{-}3)]$.
Step~3 yields
$\sum_i u_3[i]\,i^3 = -6\,|s(k,k{-}3)|\,(k{-}3)!/k!$
$= -k(k{-}1)(k{-}3)/8$,
using $|s(k,k{-}3)| = k^2(k{-}1)^2(k{-}2)(k{-}3)/48$.
Combining:
\[
  \langle u_3, v_3 \rangle
  = \frac{-(a{-}1)\,k}{4(k{-}2)}.
\]
Setting this equal to~$1$ gives $a = -(3k{-}8)/k$.
Substituting into $(1{-}x)(kx^2 - 2(k{-}4)x + k)/k$
yields~\eqref{eq:Q3-closed} after verifying $Q_3(0) = 1$.
\end{proof}

\begin{remark}[Convergence rate of $Q_j^{(k)}$]
\label{rem:Qj-rate}
The closed forms yield explicit convergence rates.
From~\eqref{eq:Q2-closed},
\[
  Q_2^{(k)}(x) - (1-x)^2 \;=\; \frac{8x}{3k-1},
\]
and from~\eqref{eq:Q3-closed},
\[
  Q_3^{(k)}(x) - (1-x)^3 \;=\; \frac{8x(1-x)}{k}.
\]
In both cases the error is $O(1/k)$ uniformly on~$[0,1]$,
with leading coefficient~$8$ times a polynomial of degree~$j-1$.
We conjecture that $\sup_{x \in [0,1]} |Q_j^{(k)}(x) - (1-x)^j| = O_j(1/k)$
holds for every fixed~$j$ as $k \to \infty$.
\end{remark}

\begin{table}[ht]
\centering
\caption{Biorthogonal eigenvector system for $k = 5$. Left eigenvectors
$u_j$ (integer-scaled by $120 = 5!$); right eigenvectors $v_j$
(normalized by $v_j[0] = 1$); generating function polynomials $Q_j$.}
\label{tab:eigenvectors-k5}
\small
\begin{tabular}{@{}clll@{}}
\toprule
$j$ & $120 \cdot u_j$ & $v_j$ & $Q_j(x)$ \\
\midrule
$0$ & $(1, 26, 66, 26, 1)$ & $(1, 1, 1, 1, 1)$ & $1$ \\
$1$ & $(10, 100, 0, -100, -10)$ & $(1, \tfrac{1}{2}, 0, -\tfrac{1}{2}, -1)$ & $1-x$ \\
$2$ & $(35, 70, -210, 70, 35)$ & $(1, \tfrac{1}{7}, -\tfrac{1}{7}, \tfrac{1}{7}, 1)$ & $\tfrac{7x^2 - 10x + 7}{7}$ \\
$3$ & $(50, -100, 0, 100, -50)$ & $(1, -\tfrac{1}{10}, 0, \tfrac{1}{10}, -1)$ & $-\tfrac{(x-1)(5x^2-2x+5)}{5}$ \\
$4$ & $(24, -96, 144, -96, 24)$ & $(1, -\tfrac{1}{4}, \tfrac{1}{6}, -\tfrac{1}{4}, 1)$ & $x^4 - x^3 + x^2 - x + 1$ \\
\bottomrule
\end{tabular}
\end{table}

\section{The Chebyshev Threshold}
\label{sec:chebyshev}

The Chebyshev representation (Theorem~\ref{thm:chebyshev}) holds for the
two-summand carry chain, where the restricted transfer matrix is $2 \times 2$.
We now prove that this is sharp: for $k$-summand addition with forbidden
set $F = \{k-1\}$, the restricted matrix $\widetilde{T}$ has dimension $d = k-1$,
and the Chebyshev form fails as soon as $d \ge 3$.
The negative direction requires two spectral conditions:
\begin{itemize}
\item[\textbf{(H1)}] \emph{Simple eigenvalues}: $\widetilde{T}$ has $d$ pairwise distinct eigenvalues.
\item[\textbf{(H2)}] \emph{Non-vanishing residues}: for every eigenvalue $\widetilde{\lambda}_j$
of $\widetilde{T}$ with right eigenvector $v_j$ and left eigenvector $u_j$,
$c_j := \mathbf{1}^\top v_j\, u_j^\top e_0 \ne 0$.
\end{itemize}

\begin{lemma}[Simple eigenvalues of $\widetilde{T}$]
\label{lem:simple-evals}
For $k$-summand base-$N$ addition with $F = \{k-1\}$ and $N \ge 2$, the matrix
$\widetilde{T}$ has $d = k-1$ pairwise distinct eigenvalues, i.e., \emph{(H1)} holds.
\end{lemma}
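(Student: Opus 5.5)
The plan is to show that $\widetilde{T}$ is an oscillatory matrix, in line with the abstract's remark that oscillatory matrix theory is used. The Gantmacher--Krein theorem then gives that all eigenvalues of $\widetilde{T}$ are real, positive, and simple, which is exactly (H1).

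\emph{Total nonnegativity.} By~\eqref{eq:holte-entry},
\[
T[c',c] = N^{-k}\sum_{s} B_k(s)\,\mathbf{1}\bigl[c'N \le s+c < (c'+1)N\bigr].
\]
This factors $T$ as $K\,H$, where:
\begin{itemize}
\item $H[s,c] = B_k(s-c)$ is a Toeplitz matrix built from the sequence $B_k$, which is the $k$-fold convolution of the uniform sequence $(1,\dots,1)$;
\item $K[c',s] = \mathbf{1}[c'N \le s < (c'+1)N]$ is a block-indicator matrix.
\end{itemize}
The sequence $(1,\dots,1)$ is a Pólya frequency sequence, since its generating polynomial $(1-z^N)/(1-z)$ has only roots on the unit circle. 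The natural tool here is Aissen--Schoenberg--Whitney/Edrei, under which real-rootedness would give the PF property; that fails for these complex roots. So I would instead argue directly that the Toeplitz matrix of $(1,\dots,1)$ is TN, as an interval-indicator kernel $\mathbf{1}[0\le s-c\le N-1]$ with both endpoints monotone in $c$. Convolutions of PF sequences are PF, so $H$ is TN. The matrix $K$ is TN by the same interval-kernel argument: it is a 0/1 matrix whose rows are consecutive intervals with increasing endpoints. By Cauchy--Binet, $T$ is TN. Passing to the principal submatrix on $\{0,\dots,k-2\}$ preserves total nonnegativity, since minors of a submatrix are minors of $T$.

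\emph{Invertibility and the oscillatory criterion.} By Gantmacher--Krein, a TN matrix that is nonsingular and has strictly positive sub- and superdiagonal entries is oscillatory.
\begin{itemize}
\item For adjacent $c,c'$, the interval $[c'N-c,\,(c'+1)N-c-1]$ meets the support $[0,k(N-1)]$ of $B_k$ whenever $c'\le k-2$, $c\le k-2$ and $|c-c'|=1$ (check the endpoints), so $\widetilde{T}[c',c]>0$.
\item For nonsingularity, I first try the following. Since $\widetilde{T}$ is TN, $\det\widetilde{T}\ge0$. The diagonal entries are positive, so if I can show $\widetilde{T}$ is irreducible and TN with positive diagonal, the standard fact gives that all leading principal minors are positive. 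Alternatively, I would use the Cauchy--Binet expansion and exhibit one positive term: choose $d$ rows of $K$ and columns of $H$ forming a diagonal of positive entries.
\end{itemize}

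Gantmacher--Krein then yields $d$ distinct positive eigenvalues.

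The main obstacle I expect is the nonsingularity together with a clean TN proof for $H$. If the interval-kernel argument is messy, a fallback for (H1) is to interlace: $\widetilde{T}$ is a principal submatrix of the oscillatory matrix $T$, and eigenvalues of oscillatory principal submatrices strictly interlace. This places the spectrum of $\widetilde{T}$ strictly between the consecutive $N^{-j}$, which forces distinctness directly.
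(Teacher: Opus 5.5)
\textbf{There is a genuine gap: the total nonnegativity step is false for $N\ge 3$.} Your strategy is essentially the paper's: total nonnegativity from a P\'olya-frequency property of $B_k$, then oscillation, then Gantmacher--Krein or strict interlacing. It fails at the same point the paper's does.

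The Aissen--Schoenberg--Whitney/Edrei theorem is a characterization, not just a sufficient condition. A finite nonnegative sequence is PF if and only if its generating polynomial has only real zeros. Since $1+z+\cdots+z^{N-1}$ has non-real zeros when $N\ge 3$, neither $(1,\dots,1)$ nor $B_k$ is PF. (For $N=2$ the claim is true, since $B_k$ is then binomial.)

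Your ``interval kernel with monotone endpoints'' argument only controls $2\times 2$ minors, i.e.\ log-concavity. For the Toeplitz matrix of $(1,1,1)$, rows $1,2,3$ and columns $0,1,2$ give $\det\begin{pmatrix}1&1&0\\1&1&1\\0&1&1\end{pmatrix}=-1$. The defect survives in your actual factor $H$, which has only $k$ columns. For $k=N=3$ we have $B_3=(1,3,6,7,6,3,1)$, and rows $s=1,2,3$ give $\det\begin{pmatrix}3&1&0\\6&3&1\\7&6&3\end{pmatrix}=-2$.

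Consequences:
\begin{itemize}
\item Cauchy--Binet on $T=KH$ proves nothing.
\item Your interlacing fallback needs the same TN property of $T$.
\item The paper's Step~1 asserts exactly the same PF claim, so it does not fill the hole either.
\item $T_{\mathrm{count}}$ itself does appear to be TN in small cases, e.g.\ $\begin{pmatrix}10&4&1\\16&19&16\\1&4&10\end{pmatrix}$ for $k=N=3$, but that would need a different proof.
\item Your first nonsingularity argument is also wrong: an irreducible TN matrix with positive diagonal can be singular, e.g.\ the $2\times 2$ all-ones matrix.
\end{itemize}

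\textbf{A repair that avoids total positivity.} Let $T_{\mathrm{count}}$ have simple eigenvalues $\mu_j=N^{k-j}$ and spectral projectors $E_j$. Cramer's rule and the spectral expansion give
\[
\frac{\chi_{\widetilde{T}_{\mathrm{count}}}(\lambda)}{\chi_{T_{\mathrm{count}}}(\lambda)}
=\bigl[(\lambda I-T_{\mathrm{count}})^{-1}\bigr]_{k-1,k-1}
=\sum_{j=0}^{k-1}\frac{E_j[k-1,k-1]}{\lambda-N^{k-j}}.
\]
Centrosymmetry gives $T=JTJ$ with $J$ the reversal, hence $JE_jJ=E_j$ and $E_j[k-1,k-1]=E_j[0,0]$. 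Comparing $T_{\mathrm{count}}^n[0,0]=\binom{N^n+k-1}{k}$ with $\sum_j N^{n(k-j)}E_j[0,0]$, as in the paper's normalization step, gives $E_j[0,0]=|s(k,k-j)|/k!>0$.

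A sum of positive multiples of $1/(\lambda-\mu_j)$ is strictly decreasing on each gap $(N^{k-j},N^{k-j+1})$, $j=1,\dots,k-1$. On each gap it runs from $+\infty$ to $-\infty$, so it vanishes exactly once there. Since $\chi_{T_{\mathrm{count}}}\neq 0$ on these gaps and $\chi_{\widetilde{T}_{\mathrm{count}}}$ has degree $k-1$, these $k-1$ points are all of its roots. They are distinct and positive, which gives (H1), strict interlacing, and nonsingularity at once. This is the paper's Stirling--Lagrange formula used as an interlacing certificate, and it replaces the oscillatory-matrix machinery entirely.
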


\begin{proof}
We show that $T_{\mathrm{count}} = N^k T$ is an \emph{oscillatory matrix}
in the sense of Gantmacher--Krein~\cite{GantmacherKrein2002}, which gives
strict eigenvalue interlacing for principal submatrices.

\emph{Step~1: $T_{\mathrm{count}}$ is totally non-negative (TN).}
By the entry formula~\eqref{eq:holte-entry},
$T_{\mathrm{count}}[c',c] = \sum_{s=c'N-c}^{(c'+1)N-c-1} B_k(s)$,
where $B_k(s)$
is the $k$-fold convolution of the discrete uniform distribution on
$\{0,\ldots,N-1\}$.  Since the indicator of $\{0,\ldots,N-1\}$ is a
P\'{o}lya frequency (PF) sequence and convolutions of PF sequences are PF,
$B_k$ is a PF sequence.  The matrix $T_{\mathrm{count}}$ is formed by summing
$B_k$ over blocks of $N$ consecutive arguments with offsets depending linearly
on the row and column indices; by the composition formula for TN matrices
(see~\cite[Ch.\,II]{GantmacherKrein2002}), $T_{\mathrm{count}}$ is TN.

\emph{Step~2: $T_{\mathrm{count}}$ is oscillatory.}
Non-singularity: $\det(T_{\mathrm{count}}) = \prod_{j=0}^{k-1} N^{k-j}
= N^{k(k+1)/2} > 0$ (the eigenvalues of $T_{\mathrm{count}}$ are
$\{N^{k-j}\}_{j=0}^{k-1}$ by Theorem~\ref{thm:holte-spectrum}(i)).
Primitivity: $T_{\mathrm{count}}$ has positive diagonal entries and the chain
is irreducible; hence $T_{\mathrm{count}}^m$ is entry-wise positive for
some~$m$.  By~\cite[Ch.\,II, \S3]{GantmacherKrein2002}, a TN matrix that is
non-singular and has some entry-wise positive power is oscillatory.

\emph{Step~3: Interlacing.}
Since $\widetilde{T}_{\mathrm{count}} = N^k \widetilde{T}$ is the leading
$(k{-}1)\times(k{-}1)$ principal submatrix of the oscillatory matrix
$T_{\mathrm{count}}$, it is itself TN, non-singular (by Step~4 below),
and oscillatory.  By the
interlacing theorem for oscillatory
matrices~\cite[Ch.\,II, Thm.\,5]{GantmacherKrein2002}, the eigenvalues of
$\widetilde{T}_{\mathrm{count}}$ strictly interlace those of
$T_{\mathrm{count}}$:
\[
  N^{k-(j-1)} > \widetilde{\mu}_j > N^{k-j}, \qquad j = 1, \ldots, k-1,
\]
where $\widetilde{\mu}_j = N^k \widetilde{\lambda}_j$.
The intervals $(N^{k-j}, N^{k-(j-1)})$ are pairwise disjoint, so the $k-1$
eigenvalues $\widetilde{\lambda}_1 > \cdots > \widetilde{\lambda}_{k-1}$ are
distinct.
\end{proof}

\begin{lemma}[Non-vanishing residues]
\label{lem:nonzero-res}
Under the same assumptions ($k$-summand base-$N$ addition with $F = \{k-1\}$,
$N \ge 2$, $k \ge 3$), every eigenvalue of $\widetilde{T}$ contributes a non-zero
pole to $A(z)$, i.e., \emph{(H2)} holds: the coefficients
$c_j := \mathbf{1}^\top v_j\, u_j^\top e_0 \ne 0$
for all $j$, where $v_j$ and $u_j$ are normalised right and left eigenvectors of
$\widetilde{T}$ for $\widetilde{\lambda}_j$.
\end{lemma}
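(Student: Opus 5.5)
We prove the two factors of $c_j = (\mathbf{1}^\top v_j)(u_j^\top e_0)$ non-zero separately. The factor $\mathbf{1}^\top v_j$ follows from a bordering argument combined with the strict interlacing in Lemma~\ref{lem:simple-evals}. The factor $u_j^\top e_0$ follows from the sign structure of eigenvectors of oscillatory matrices.

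\emph{Step 1: $\mathbf{1}^\top v_j \neq 0$.} Extend $v_j$ by zero to $w := (v_j, 0) \in \mathbb{R}^k$, and let $r \in \mathbb{R}^{k-1}$ be the last row of $T$ restricted to columns $0,\ldots,k-2$, so $r[c] = T[k{-}1,c]$. Since $\widetilde{T}$ is the leading principal submatrix of $T$, we get
\[
  Tw = \bigl(\widetilde{\lambda}_j v_j,\; r^\top v_j\bigr).
\]
$T$ is column-stochastic, so $\mathbf{1}^\top T = \mathbf{1}^\top$, and pairing with $w$ gives $\widetilde{\lambda}_j\,\mathbf{1}^\top v_j + r^\top v_j = \mathbf{1}^\top v_j$, that is,
\[
  (1-\widetilde{\lambda}_j)\,\mathbf{1}^\top v_j = r^\top v_j .
\]
Suppose $r^\top v_j = 0$. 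Then $Tw = \widetilde{\lambda}_j w$ with $w \neq 0$, so $\widetilde{\lambda}_j$ would be an eigenvalue of $T$. This contradicts the strict interlacing $N^{-(j-1)} > \widetilde{\lambda}_j > N^{-j}$ from Lemma~\ref{lem:simple-evals}, since the eigenvalues of $T$ are exactly the $N^{-m}$ (Theorem~\ref{thm:holte-spectrum}(i)). Hence $r^\top v_j \neq 0$. Interlacing also gives $\widetilde{\lambda}_j < 1$, so we may divide by $1-\widetilde{\lambda}_j$ and conclude $\mathbf{1}^\top v_j \neq 0$.

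\emph{Step 2: $u_j^\top e_0 = u_j[0] \neq 0$.} By the proof of Lemma~\ref{lem:simple-evals}, $\widetilde{T}_{\mathrm{count}}$ is oscillatory. The transpose of an oscillatory matrix is oscillatory, because total non-negativity, non-singularity and positivity of some power are all preserved under transposition. So $u_j$, as a right eigenvector of $\widetilde{T}_{\mathrm{count}}^\top$, is an eigenvector of an oscillatory matrix with a simple eigenvalue. We then invoke the Gantmacher--Krein theorem on eigenvectors of oscillatory matrices~\cite[Ch.\,II]{GantmacherKrein2002}: the eigenvector for the $j$-th largest eigenvalue has exactly $j-1$ sign changes, and its first and last coordinates are non-zero. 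The non-vanishing of the first coordinate is the Gantmacher--Krein consequence that a non-zero linear combination of the first $p$ eigenvectors has at most $p-1$ sign changes. Applying this gives $u_j[0] \neq 0$.

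Combining the two steps, $c_j \neq 0$ for every $j$, so (H2) holds.

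The main obstacle is Step 2, since it rests on the oscillatory property established in Lemma~\ref{lem:simple-evals}; in particular, the total non-negativity of $T_{\mathrm{count}}$ there was argued only by a citation. If a more self-contained route is wanted, I would first try a bordering argument analogous to Step 1, using the reversal symmetry of Lemma~\ref{lem:centrosymmetry}. However, the forbidden state $k-1$ breaks that symmetry for $\widetilde{T}$, so the Gantmacher--Krein sign-change theorem is the route I would commit to.
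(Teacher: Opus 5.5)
Your proof is correct. The factor $u_j^\top e_0$ is handled as in the paper: oscillation of $\widetilde{T}_{\mathrm{count}}$ and of its transpose, then the Gantmacher--Krein sign-change theorem. The difference is in the factor $\mathbf{1}^\top v_j$.

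Both you and the paper first use column-stochasticity to get $(1-\widetilde\lambda_j)\,\mathbf{1}^\top v_j = r^\top v_j$; in count normalisation this reads $p^\top v_j = (N^k-\widetilde\lambda_j)\,\mathbf{1}^\top v_j$. The paper then proves $p^\top v_j\neq 0$ by a Schur-complement expansion of $\det(\lambda I - T_{\mathrm{count}})$. It uses simplicity of $\widetilde\lambda_j$ to write $\operatorname{adj}(\widetilde\lambda_j I-\widetilde T)=\gamma_j v_j u_j^\top$, and arrives at $\chi_{T_{\mathrm{count}}}(\widetilde\lambda_j) = -\gamma_j\,(p^\top v_j)(u_j^\top b)$.

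Your argument is shorter: if $r^\top v_j=0$, then $(v_j,0)$ is an eigenvector of $T$ with eigenvalue $\widetilde\lambda_j$. This needs only $\widetilde\lambda_j\notin\operatorname{spec}(T)$ and $\widetilde\lambda_j\neq 1$, with no simplicity or adjugate. The paper's identity additionally gives $u_j^\top b\neq 0$, which the lemma does not need.

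Because you treat all $j$ uniformly, you also avoid the paper's separate Perron--Frobenius step for $j=1$. That step rests on $\widetilde T$ being entrywise positive, which fails in general. For $k=4$, $N=2$ one has $\widetilde T[0,2]=0$, since from carry $2$ the outgoing carry is at least $1$. The paper's conclusions still survive: irreducibility is enough for Perron--Frobenius, and a totally non-negative non-singular matrix with positive sub- and superdiagonal is oscillatory.

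Two small points on your Step~2:
\begin{itemize}
\item The non-vanishing of $u_j[0]$ comes from $S^{+}(u_j)=S^{-}(u_j)=j-1$. The upper bound you cite must be the one for $S^{+}$, where zeros may be counted with either sign. A bound on ordinary sign changes alone would not exclude $u_j[0]=0$.
\item Your caution about the TN citation is warranted. For $N\ge 3$ the indicator of $\{0,\ldots,N-1\}$ is log-concave but not a P\'olya frequency sequence: $1+x+\cdots+x^{N-1}$ has non-real zeros, and for $N=3$ one finds a negative $3\times 3$ Toeplitz minor. So the justification in Step~1 of Lemma~\ref{lem:simple-evals} needs replacing. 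This affects the paper's proof and yours equally, since both rest on that lemma.
\end{itemize}
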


\begin{proof}
Write the full count matrix $T_{\mathrm{count}}$ for $k$-summand base-$N$ addition
in block form with respect to the partition $\{0,\ldots,k{-}2\} \cup \{k{-}1\}$:
\[
  T_{\mathrm{count}} = \begin{pmatrix} \widetilde{T} & b \\ p^\top & \alpha \end{pmatrix},
\]
where $p_c := T_{\mathrm{count}}[k{-}1,\, c]$ for $c \in \{0,\ldots,k{-}2\}$,
$b_c := T_{\mathrm{count}}[c,\, k{-}1]$, and
$\alpha := T_{\mathrm{count}}[k{-}1,\, k{-}1]$.
Since every digit-$k$-tuple contributes to some carry transition, all entries are
positive.

\medskip
\noindent\textit{Positivity and the Perron eigenvalue ($j = 1$).}
Every pair of states $c, c' \in \{0,\ldots,k{-}2\}$ has $\widetilde{T}[c',c] > 0$,
so $\widetilde{T}$ is a positive matrix in the sense of Perron--Frobenius.
The dominant eigenvectors $v_1$ and $u_1$ are entrywise positive, giving
$\mathbf{1}^\top v_1 > 0$ and $u_1^\top e_0 > 0$.

\medskip
\noindent\textit{Non-vanishing of $u_j^\top e_0$ for $j \ge 2$
(total non-negativity and oscillation).}
We show that $\widetilde{T}$ is an \emph{oscillatory matrix} in the sense of
Gantmacher--Krein~\cite{GantmacherKrein2002}, which forces the strict sign-interlacing
of \emph{both} left and right eigenvectors and, in particular, $v_j(0) \ne 0$ and
$u_j(0) \ne 0$ for every~$j$.
Recall that a matrix is \emph{totally non-negative} (TN) if every minor is~${\ge}\,0$,
and \emph{oscillatory} if it is TN, non-singular, and some power is
entry-wise positive.

\emph{Step~A: $T_{\mathrm{count}}$ is oscillatory.}
This was established in the proof of Lemma~\ref{lem:simple-evals}
(Steps~1--2): $T_{\mathrm{count}}$ is TN (from the P\'{o}lya frequency
property of~$B_k$), non-singular, and primitive.

\emph{Step~B: $\widetilde{T}$ is oscillatory.}
Every principal submatrix of a TN matrix is TN~\cite[Ch.\,II, \S1]{GantmacherKrein2002};
since $\widetilde{T} = T_{\mathrm{count}}|_{\{0,\ldots,k-2\}}$, it is TN.
It is non-singular ($\det(\widetilde{T}) = \prod_j \widetilde{\lambda}_j > 0$ by
interlacing) and already entry-wise positive; hence $\widetilde{T}$ is oscillatory.

\emph{Step~C: conclusion.}
By the oscillation theorem~\cite[Ch.\,II, Thm.\,4]{GantmacherKrein2002},
the $j$-th right eigenvector of an oscillatory matrix has exactly $j{-}1$ sign
changes, giving $v_j(0) \ne 0$ for all $j$.  The same theorem applied to the
transpose $\widetilde{T}^\top$ (which is also oscillatory, since the transpose
of an oscillatory matrix is oscillatory) gives that the $j$-th left eigenvector
$u_j$ has exactly $j{-}1$ sign changes, so $u_j(0) \ne 0$.
Hence $u_j^\top e_0 \ne 0$.

\medskip
\noindent\textit{Non-vanishing of $\mathbf{1}^\top v_j$ for $j \ge 2$
(Schur complement argument).}

\textit{Step~1: Column-sum identity.}
Every column of $T_{\mathrm{count}}$ sums to $N^k$ (the total number of digit $k$-tuples), so
\[
  \sum_{c'=0}^{k-2} \widetilde{T}[c', c] \;=\; N^k - p_c,
  \qquad c \in \{0,\ldots,k{-}2\}.
\]
In matrix notation: $\mathbf{1}^\top \widetilde{T} = r^\top$ where $r_c := N^k - p_c > 0$.

\textit{Step~2: Linking $p^\top v_j$ to $\mathbf{1}^\top v_j$.}
Multiplying the eigenvector equation $\widetilde{T} v_j = \widetilde{\lambda}_j v_j$
on the left by $\mathbf{1}^\top$ gives
\[
  r^\top v_j \;=\; \widetilde{\lambda}_j\,\mathbf{1}^\top v_j.
\]
Substituting $r = N^k \mathbf{1} - p$:
\[
  N^k\,\mathbf{1}^\top v_j - p^\top v_j \;=\; \widetilde{\lambda}_j\,\mathbf{1}^\top v_j,
  \qquad\text{i.e.,}\qquad
  p^\top v_j \;=\; \bigl(N^k - \widetilde{\lambda}_j\bigr)\,\mathbf{1}^\top v_j.
\]
By Theorem~\ref{thm:holte-spectrum}, the eigenvalues of the normalised Holte matrix
$T = T_{\mathrm{count}}/N^k$ are $\{N^{-j} : j = 0,\ldots,k-1\}$, so the eigenvalues
of $T_{\mathrm{count}}$ are $\{N^{k-j}\}$.  By strict interlacing
(Lemma~\ref{lem:simple-evals}), $\widetilde{\lambda}_j < N^k$ for all $j$,
hence $N^k - \widetilde{\lambda}_j > 0$.  Therefore:
\begin{equation}
\label{eq:pv-equiv}
  p^\top v_j = 0 \;\iff\; \mathbf{1}^\top v_j = 0.
\end{equation}

\textit{Step~3: Schur complement.}
The characteristic polynomial of $T_{\mathrm{count}}$ satisfies, by the Schur complement
determinant formula applied to $(\lambda I - T_{\mathrm{count}})$ with pivot block
$(\lambda - \alpha)$:
\[
  \det(\lambda I - T_{\mathrm{count}})
  \;=\; (\lambda - \alpha)\det(\lambda I - \widetilde{T})
        - p^\top \operatorname{adj}(\lambda I - \widetilde{T})\, b.
\]
Evaluate at $\lambda = \widetilde{\lambda}_j$: since
$\det(\widetilde{\lambda}_j I - \widetilde{T}) = 0$, this reduces to
\begin{equation}
\label{eq:schur-eval}
  Q_{T_{\mathrm{count}}}(\widetilde{\lambda}_j)
  \;=\; -\,p^\top \operatorname{adj}(\widetilde{\lambda}_j I - \widetilde{T})\, b.
\end{equation}
Since $\widetilde{\lambda}_j$ is a simple eigenvalue of $\widetilde{T}$
(Lemma~\ref{lem:simple-evals}), the matrix $\widetilde{\lambda}_j I - \widetilde{T}$
has rank $d - 1$ and its adjugate has rank~$1$:
\[
  \operatorname{adj}(\widetilde{\lambda}_j I - \widetilde{T})
  \;=\; \gamma_j\, v_j u_j^\top,
  \qquad \gamma_j := \prod_{m \ne j}(\widetilde{\lambda}_j - \widetilde{\lambda}_m) \ne 0.
\]
Substituting into~\eqref{eq:schur-eval}:
\begin{equation}
\label{eq:schur-factored}
  Q_{T_{\mathrm{count}}}(\widetilde{\lambda}_j)
  \;=\; -\,\gamma_j\,(p^\top v_j)(u_j^\top b).
\end{equation}

\textit{Step~4: Conclusion.}
The left side of~\eqref{eq:schur-factored} satisfies
$Q_{T_{\mathrm{count}}}(\widetilde{\lambda}_j) \ne 0$: by strict interlacing
(Lemma~\ref{lem:simple-evals}), $\widetilde{\lambda}_j$ lies strictly between two
consecutive eigenvalues of $T_{\mathrm{count}}$, so it is not an eigenvalue of
$T_{\mathrm{count}}$.  Since $\gamma_j \ne 0$ as well, equation~\eqref{eq:schur-factored}
forces $(p^\top v_j)(u_j^\top b) \ne 0$, hence $p^\top v_j \ne 0$.
By~\eqref{eq:pv-equiv}, $\mathbf{1}^\top v_j \ne 0$.

Combining: $c_j = (\mathbf{1}^\top v_j)(u_j^\top e_0) \ne 0$ for all $j$.
\end{proof}

The next result determines the characteristic polynomial of $\widetilde{T}_{\mathrm{count}}$
in closed form, using only the Holte spectrum and the normalization
constants~$c_{k,j}$ from Proposition~\ref{prop:stirling-eulerian}.

\begin{proposition}[Stirling--Lagrange formula for $\chi_{\widetilde{T}}$]
\label{prop:stirling-lagrange}
Let $\widetilde{T}_{\mathrm{count}} = T_{\mathrm{count}}|_{\{0,\ldots,k-2\}}$
be the leading $(k{-}1)\times(k{-}1)$ principal submatrix of the Holte
count matrix.  Then
\begin{equation}
\label{eq:stirling-lagrange}
\chi_{\widetilde{T}_{\mathrm{count}}}(\lambda)
\;=\; \frac{1}{k!}\sum_{j=0}^{k-1} |s(k,\,k{-}j)|\;
\prod_{\substack{i=0\\i\neq j}}^{k-1}\bigl(\lambda - N^{k-i}\bigr),
\end{equation}
where $|s(k,m)|$ denotes unsigned Stirling numbers of the first kind.
Equivalently, in resolvent form:
\[
\frac{\chi_{\widetilde{T}_{\mathrm{count}}}(\lambda)}
     {\chi_{T_{\mathrm{count}}}(\lambda)}
\;=\; \sum_{j=0}^{k-1}\frac{c_{k,j}}{\lambda - N^{k-j}},
\qquad c_{k,j} = \frac{|s(k,\,k{-}j)|}{k!}.
\]
\end{proposition}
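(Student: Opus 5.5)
The identity follows from Cramer's rule applied to the resolvent of $T_{\mathrm{count}}$, evaluated at the $(k{-}1,k{-}1)$ entry.

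For $\lambda$ not an eigenvalue of $T_{\mathrm{count}}$, deleting row and column $k{-}1$ from $\lambda I - T_{\mathrm{count}}$ leaves exactly $\lambda I - \widetilde{T}_{\mathrm{count}}$. Cramer's rule therefore gives
\[
\bigl[(\lambda I - T_{\mathrm{count}})^{-1}\bigr]_{k-1,k-1}
= \frac{\chi_{\widetilde{T}_{\mathrm{count}}}(\lambda)}{\chi_{T_{\mathrm{count}}}(\lambda)}.
\]
It is convenient to use the centrosymmetry Lemma~\ref{lem:centrosymmetry} here. The permutation $c\mapsto k{-}1{-}c$ commutes with $T_{\mathrm{count}}$, so the $(k{-}1,k{-}1)$ resolvent entry equals the $(0,0)$ entry. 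This is where the return-probability computation of Proposition~\ref{prop:stirling-eulerian} lives.

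By Theorem~\ref{thm:holte-spectrum}, $T_{\mathrm{count}}$ is diagonalizable with simple eigenvalues $N^{k-j}$. With the spectral projectors $E_j = v_j u_j^\top$ we have
\[
(\lambda I - T_{\mathrm{count}})^{-1} = \sum_j \frac{E_j}{\lambda - N^{k-j}}.
\]
Step~N2 of the proof of Proposition~\ref{prop:stirling-eulerian} shows $E_j[0,0] = v_j[0]\,u_j[0] = c_{k,j}$, and Step~N4 identifies $c_{k,j} = |s(k,k{-}j)|/k!$. This yields the resolvent form
\[
\frac{\chi_{\widetilde{T}_{\mathrm{count}}}(\lambda)}{\chi_{T_{\mathrm{count}}}(\lambda)} = \sum_{j=0}^{k-1}\frac{c_{k,j}}{\lambda - N^{k-j}}.
\]
Multiplying through by $\chi_{T_{\mathrm{count}}}(\lambda) = \prod_i(\lambda - N^{k-i})$ gives \eqref{eq:stirling-lagrange}, first as an identity of rational functions and then, since both sides are polynomials, identically. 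As a consistency check, the leading coefficient is $\sum_j c_{k,j} = 1$, matching the fact that $\chi_{\widetilde{T}_{\mathrm{count}}}$ is monic of degree $k{-}1$. This was already verified in the biorthogonality paragraph.

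The only delicate point is the index bookkeeping. The deleted state is $k{-}1$, while the normalization $E_j[0,0] = c_{k,j}$ was established at state $0$. I would state the centrosymmetry step explicitly: conjugating by the reversal permutation $J$ preserves $T_{\mathrm{count}}$, hence preserves each spectral projector (the eigenvalues are simple), so $E_j[k{-}1,k{-}1] = E_j[0,0]$. Alternatively, one can check directly from the formulas for $u_j[k{-}1]$ and $v_j[k{-}1]$ that the two signs $(-1)^j$ cancel. A secondary point is that the normalization $v_j[0]=1$ with $\langle u_j,v_j\rangle=1$ indeed makes $v_ju_j^\top$ the spectral projector, which Proposition~\ref{prop:stirling-eulerian} already establishes.
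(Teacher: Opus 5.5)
Your proof is correct and follows the paper's route: the cofactor (Cramer) identity for the $(k{-}1,k{-}1)$ resolvent entry, the spectral expansion of $(\lambda I - T_{\mathrm{count}})^{-1}$, identifying the diagonal projector entries with $c_{k,j}$, and clearing denominators. The one variation is in the projector entry. You transfer it to $(0,0)$ via $J T_{\mathrm{count}} J = T_{\mathrm{count}}$ and reuse $E_j[0,0]=c_{k,j}$ from Steps N2--N4, whereas the paper computes $v_j[k{-}1]=(-1)^j$ and $u_j[k{-}1]=(-1)^j c_{k,j}$ directly, which is your stated alternative. Your version has the mild advantage that $E_j[0,0]$ is intrinsic and pinned down by the return count, independent of eigenvector normalization conventions.
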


\begin{proof}
\textit{Step~1 (Resolvent diagonal).}
The classical cofactor identity for a principal submatrix gives
\[
\bigl[(\lambda I - T_{\mathrm{count}})^{-1}\bigr]_{k-1,\,k-1}
= \frac{\chi_{\widetilde{T}_{\mathrm{count}}}(\lambda)}
       {\chi_{T_{\mathrm{count}}}(\lambda)}.
\]

\textit{Step~2 (Spectral expansion).}
Since $T_{\mathrm{count}}$ has simple eigenvalues $\mu_j = N^{k-j}$ with
spectral projectors $E_j = v_j u_j^\top$
(biorthogonal system from Propositions~\ref{prop:stirling-eulerian}
and~\ref{prop:right-ev}):
\[
\bigl[(\lambda I - T_{\mathrm{count}})^{-1}\bigr]_{k-1,\,k-1}
= \sum_{j=0}^{k-1}\frac{v_j[k{-}1]\cdot u_j[k{-}1]}{\lambda - N^{k-j}}.
\]

\textit{Step~3 (Last components).}
The centrosymmetry reversal (Proposition~\ref{prop:right-ev}, proof)
with the normalization $v_j[0]=1$ gives $v_j[k{-}1] = (-1)^j$.
From~\eqref{eq:left-ev-gf}, $u_j[k{-}1]$ is the coefficient of $x^{k-1}$
in $\frac{(-1)^j|s(k,k{-}j)|}{k!}\,(x{-}1)^j A_{k-j}(x)$.
The leading coefficient of $(x{-}1)^j A_{k-j}(x)$ is
$A(k{-}j,\, k{-}j{-}1) = 1$ (corresponding to the unique
permutation in $\fS_{k-j}$ with the maximum number of descents),
so $u_j[k{-}1] = (-1)^j\,c_{k,j}$.

\textit{Step~4 (Assembly).}
The product $v_j[k{-}1]\cdot u_j[k{-}1] = (-1)^{2j}\,c_{k,j} = c_{k,j}$.
Substituting into the resolvent and clearing the
denominator $\chi_{T_{\mathrm{count}}}(\lambda) = \prod_{i=0}^{k-1}(\lambda - N^{k-i})$
yields~\eqref{eq:stirling-lagrange}.
\end{proof}

\begin{corollary}[Determinant of $\widetilde{T}_{\mathrm{count}}$]
\label{cor:det-restricted}
$\displaystyle
\det(\widetilde{T}_{\mathrm{count}})
= \frac{N^{\binom{k}{2}}}{k!}\prod_{i=1}^{k-1}(iN + 1).
$
\end{corollary}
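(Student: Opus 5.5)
The plan is to evaluate the Stirling--Lagrange formula of Proposition~\ref{prop:stirling-lagrange} at $\lambda = 0$. Since $\widetilde{T}_{\mathrm{count}}$ is $(k{-}1)\times(k{-}1)$, we have $\det(\widetilde{T}_{\mathrm{count}}) = (-1)^{k-1}\chi_{\widetilde{T}_{\mathrm{count}}}(0)$. Setting $\lambda = 0$ in~\eqref{eq:stirling-lagrange} reduces the problem to computing, for each $j$, the product $\prod_{i\ne j}(-N^{k-i})$.

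First I compute the exponent. We have $\sum_{i=0}^{k-1}(k-i) = k(k+1)/2$. Removing the term $i=j$ gives the exponent $k(k+1)/2 - (k-j) = \binom{k}{2} + j$. Each product has $k-1$ factors, so
\[
\prod_{i\ne j}\bigl(-N^{k-i}\bigr) = (-1)^{k-1} N^{\binom{k}{2}+j}.
\]
This sign cancels the prefactor $(-1)^{k-1}$, and we obtain
\[
\det(\widetilde{T}_{\mathrm{count}}) = \frac{N^{\binom{k}{2}}}{k!}\sum_{j=0}^{k-1}|s(k,k{-}j)|\,N^{j}.
\]

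Next I identify the sum as a product. Step~N4 in the proof of Proposition~\ref{prop:stirling-eulerian} gives $|s(k,k{-}j)| = e_j(1,2,\ldots,k{-}1)$. The generating identity for elementary symmetric polynomials then gives
\[
\sum_{j=0}^{k-1} e_j(1,\ldots,k{-}1)\,N^j = \prod_{i=1}^{k-1}(1+iN),
\]
which is the claimed formula.

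There is no real obstacle here. The only points needing care are the sign bookkeeping and the exponent count. As a sanity check I would verify the case $k=2$ directly. There $\widetilde{T}_{\mathrm{count}}$ is the $1\times1$ entry $T_{\mathrm{count}}[0,0] = \binom{N+1}{2}$, which counts pairs of digits with sum $<N$. The formula gives $N(N+1)/2$, in agreement.
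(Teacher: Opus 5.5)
Your proposal is correct and follows the paper's own proof. Both evaluate the Stirling--Lagrange formula~\eqref{eq:stirling-lagrange} at $\lambda=0$, use $\det(\widetilde{T}_{\mathrm{count}})=(-1)^{k-1}\chi_{\widetilde{T}_{\mathrm{count}}}(0)$, and collapse $\sum_j |s(k,k{-}j)|\,N^j$ into $\prod_{i=1}^{k-1}(1+iN)$; your exponent count $\binom{k}{2}+j$ and the $k=2$ sanity check spell out details the paper leaves implicit.
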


\begin{proof}
Evaluating~\eqref{eq:stirling-lagrange} at $\lambda = 0$ and using the
generating function $\sum_{j=0}^{k-1}|s(k,k{-}j)|\,x^j = \prod_{i=1}^{k-1}(1+ix)$
at $x = N$:
\[
\chi_{\widetilde{T}_{\mathrm{count}}}(0)
= \frac{(-1)^{k-1}N^{\binom{k}{2}}}{k!}\prod_{i=1}^{k-1}(1+iN),
\]
so $\det(\widetilde{T}_{\mathrm{count}}) = (-1)^{k-1}\chi_{\widetilde{T}_{\mathrm{count}}}(0)
= \frac{N^{\binom{k}{2}}}{k!}\prod_{i=1}^{k-1}(iN+1)$.
\end{proof}

\begin{remark}[Representation-theoretic content of $\chi_{\widetilde{T}}$]
\label{rem:stirling-lagrange-structure}
The formula~\eqref{eq:stirling-lagrange} is a Lagrange interpolation at the
geometric nodes $\{N^m\}_{m=1}^{k}$ with weights $c_{k,j} = |s(k,k{-}j)|/k!$
that are independent of~$N$.  As $N$ varies, only the node positions change;
the Stirling weights are universal.

The weights $c_{k,j}$ are the diagonal entries of the Foulkes character
table of~$\fS_k$~\cite{Foulkes1980, DiaconisFulmanJACO}: the value
$c_{k,j} = |s(k,k{-}j)|/k!$ is the Foulkes character
$\phi_j(\mathrm{id})$ evaluated at the identity permutation.
The Stirling-Lagrange formula therefore expresses the characteristic
polynomial of the restricted transfer matrix---and hence all spectral
invariants governing the Chebyshev threshold---as a
representation-theoretic quantity of the symmetric group~$\fS_k$
evaluated at the Holte spectral nodes.
In particular, the determinant formula of
Corollary~\ref{cor:det-restricted} combines these character values
with the Holte eigenvalue geometry into a single closed-form product.

This yields explicit algebraic numbers
of degree $k-1$ for the eigenvalues $\widetilde{\mu}_j$: for $k=3$ the
quadratic from Theorem~\ref{thm:cheb-threshold}(a); for $k \ge 4$,
irreducible polynomials whose minimal polynomials are determined
by~\eqref{eq:stirling-lagrange} in closed form.
\end{remark}

\begin{theorem}[Chebyshev threshold]
\label{thm:cheb-threshold}
\begin{enumerate}[label=(\alph*)]
\item \emph{(Any shadow.)} If $d = 1$, then $a(L) = \tau^L$ where $\tau$ is the
sole entry of $\widetilde{T}$.  If $d = 2$, then
$a(L) = (\sqrt{\delta})^L\, U_L(x)$, where $\delta = \det(\widetilde{T})$,
$x = \operatorname{tr}(\widetilde{T})/(2\sqrt{\delta})$, and $U_L$ is the Chebyshev polynomial
of the second kind.  In both cases the parameters are those of the \emph{restricted} matrix
$\widetilde{T}$; in particular $\delta$ coincides with $\mathfrak{d} = |\mathrm{Gen}|\cdot|\mathrm{Prop}|$
only when $d = 2$ and all carry transitions are accounted for in $\widetilde{T}$.
\item \emph{(General shadows satisfying \emph{(H1)} and \emph{(H2)}.)} If $d \ge 3$
and $\widetilde{T}$ satisfies \emph{(H1)} and \emph{(H2)}, then $a(L)$ does not admit
any Chebyshev parametrization.
\item \emph{($k$-summand addition.)} For $k$-summand base-$N$ addition with
$F = \{k-1\}$ and $N \ge 2$: the avoidance count is geometric for $k = 2$
(Part~(a), $d = 1$); it admits a Chebyshev parametrization for $k = 3$
(Part~(a), $d = 2$); and no Chebyshev parametrization exists for $k \ge 4$
(Part~(b), $d \ge 3$).
\end{enumerate}
\end{theorem}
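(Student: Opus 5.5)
I will prove the three parts in order: (a) by Cayley--Hamilton, (b) by a pole-counting argument on the generating function $A(z)=\sum_L a(L)z^L$, and (c) by combining (a) and (b) with Lemmas~\ref{lem:simple-evals} and~\ref{lem:nonzero-res}.

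\emph{Part (a).} For $d=1$ the claim is immediate: $\widetilde{T}=(\tau)$, so $a(L)=\tau^L$. For $d=2$ I will repeat the argument of Theorem~\ref{thm:universality}, now for an arbitrary $2\times 2$ restricted matrix. Set $\sigma=\operatorname{tr}\widetilde{T}$ and $\delta=\det\widetilde{T}$. Cayley--Hamilton gives $a(L)=\sigma a(L-1)-\delta a(L-2)$, with $a(0)=1$ and $a(1)=\mathbf{1}^\top\widetilde{T}e_0$. The rescaled sequence $b(L)=a(L)/(\sqrt\delta)^L$ satisfies the Chebyshev recurrence $b(L)=2x\,b(L-1)-b(L-2)$ with $x=\sigma/(2\sqrt\delta)$. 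Since $U_0=1$ and $U_1=2x$, I need to match the initial values. This requires $a(1)=\sigma$, i.e.\ $\mathbf{1}^\top\widetilde{T}e_0=\operatorname{tr}\widetilde{T}$. I expect this to hold only under the stated normalization (the column-sum situation of Theorem~\ref{thm:universality}). In general I will state the result with the corrected initial data, $a(L)=(\sqrt\delta)^L\bigl(U_L(x)+\beta U_{L-1}(x)\bigr)$. I will make this caveat explicit, since this is the delicate point in (a).

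\emph{Part (b).} First I fix the meaning of a Chebyshev parametrization: $a(L)=\rho^L U_L(x)$, or more generously $\rho^L$ times a fixed linear combination of $U_L,U_{L-1}$. Any such sequence satisfies a linear recurrence of order $\le 2$, so $A(z)$ is rational with at most two poles. On the other hand, diagonalizing $\widetilde{T}$ under (H1) gives
\[
A(z)=\mathbf{1}^\top(I-z\widetilde{T})^{-1}e_0=\sum_{j=1}^{d}\frac{c_j}{1-z\widetilde\lambda_j}.
\]
By (H1) the poles $1/\widetilde\lambda_j$ are distinct, and by (H2) every residue $c_j$ is nonzero. Hence $A$ has exactly $d\ge 3$ distinct poles, and equivalently the minimal recurrence of $a(L)$ has order exactly $d$. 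This contradicts the order bound, and it also excludes any linear combination of Chebyshev terms of bounded order below $d$. The main obstacle is making "any Chebyshev parametrization" precise enough that this order argument is conclusive. I will define it as above, so that the minimal recurrence order is the invariant being compared.

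\emph{Part (c).} For $k$-summand addition, $d=k-1$. For $k=2$ we have $d=1$ and for $k=3$ we have $d=2$, so Part (a) applies directly. For $k=3$ I will also check the initial condition $a(1)=\operatorname{tr}\widetilde{T}$ from the explicit $2\times 2$ block, or otherwise use the corrected form. For $k\ge4$ we have $d\ge3$: Lemma~\ref{lem:simple-evals} supplies (H1) and Lemma~\ref{lem:nonzero-res} supplies (H2), so Part (b) gives the failure.
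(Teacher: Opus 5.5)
Your plan for (b) and (c) is the paper's own. You compare the reduced form of $A(z)=\mathbf{1}^\top(I-z\widetilde T)^{-1}e_0$ under (H1)--(H2) with the order-two Chebyshev generating function, then invoke Lemmas~\ref{lem:simple-evals} and~\ref{lem:nonzero-res}.

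The caveat you raise in (a) is a genuine defect, and the paper's proof passes over it. The paper imports the closed form $a(L)=(\lambda_1^{L+1}-\lambda_2^{L+1})/(\lambda_1-\lambda_2)$ from Theorem~\ref{thm:universality}, which presupposes $a(1)=\operatorname{tr}\widetilde T$. You left the $k=3$ check open, and it fails. In the count normalization,
\[
\operatorname{tr}\widetilde T_{\mathrm{count}}-a(1)=T_{\mathrm{count}}[1,1]-T_{\mathrm{count}}[1,0]=B_3(N-1)-B_3(2N-1)=\tbinom{N+1}{2}-\tbinom{N}{2}=N,
\]
using $B_3(s)=B_3(3N-3-s)$. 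For $N=2$ one has $\widetilde T_{\mathrm{count}}=\begin{pmatrix}4&1\\4&6\end{pmatrix}$, so $a(1)=8$ while the trace is $10$, and $a(L)=1,8,60,440,\dots$. You must therefore commit to the corrected form
\[
a(L)=(\sqrt{\delta})^L\,U_L(x)-N\,(\sqrt{\delta})^{L-1}\,U_{L-1}(x),\qquad x=\sigma/(2\sqrt{\delta}),
\]
where $\sigma,\delta$ are the trace and determinant of $\widetilde T_{\mathrm{count}}$. The formula as literally stated in (a) is false for the restricted matrix of every $k=3$ addition chain.

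More is true. Here $A(z)=(1-Nz)/(1-\sigma z+\delta z^2)$, and this fraction is already reduced. Indeed, $N$ is not a root of $\chi_{\widetilde T_{\mathrm{count}}}$: Proposition~\ref{prop:stirling-lagrange} gives $\chi_{\widetilde T_{\mathrm{count}}}(N)=\tfrac13(N-N^3)(N-N^2)\neq 0$. The sequence $\rho^L U_L(y)$ has a generating function with constant numerator, so no choice of $\rho,y$ works. The very comparison the paper makes in (b), against $1/(1-N'z+\mathfrak{d}z^2)$, would exclude $k=3$ as well.

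Hence the ``$k=3$ yes, $k\ge 4$ no'' dichotomy in (c) holds only under your generous definition. That definition is $\rho^L$ times a fixed combination of $U_L$ and $U_{L-1}$, with generating function $(\alpha+\beta z)/(1-2y\rho z+\rho^2 z^2)$. It must be the definition used uniformly in (b) and (c). Your order argument covers it, since such sequences have recurrence order at most $2$.

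One small repair in (b): ``exactly $d$ distinct poles'' needs $\widetilde T$ nonsingular, because a zero eigenvalue contributes a constant rather than a pole. State the invariant instead as the minimal recurrence order $\max(\deg Q,\deg P+1)$ of the reduced fraction $P/Q$; (H1)--(H2) still force this to equal $d$. For addition $\det\widetilde T_{\mathrm{count}}>0$ anyway, and the paper's claim $\deg Q=d$ carries the same implicit assumption.
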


\begin{proof}
\noindent\textit{Part~(a): $d \le 2 \Rightarrow$ exact closed form.}

\medskip
\noindent\textit{Case $d = 1$.}
$\widetilde{T} = (\tau)$ is a $1\times 1$ matrix.  The representation formula
$a(L) = \mathbf{1}^\top \widetilde{T}^L e_0$ gives $a(L) = \tau^L$ directly, since
$\widetilde{T}^L = (\tau^L)$.

\textit{Why no exact Chebyshev formula applies.}  Suppose $\tau^L = (\sqrt{\delta})^L U_L(x)$
for all $L$, with $\delta > 0$ and $x$ finite.  For $L=1$: $\tau = \sqrt{\delta}\cdot 2x$,
so $2x\sqrt{\delta} = \tau$.  For $L=2$: $\tau^2 = \delta\,U_2(x) = \delta(4x^2-1)$.
But $4x^2\delta = (2x\sqrt{\delta})^2 = \tau^2$, so $\tau^2 = \tau^2 - \delta$, giving
$\delta = 0$, a contradiction.  Hence the Chebyshev formula with $\delta > 0$ does not apply
for $d=1$; the geometric sequence $\tau^L$ is covered by part~(a) as a separate exact case.

\medskip
\noindent\textit{Case $d = 2$.}
Write $x := \tau/(2\sqrt{\delta})$ where $\tau = \operatorname{tr}(\widetilde{T})$ and
$\delta = \det(\widetilde{T})$.  The eigenvalues are
$\lambda_{1,2} = \sqrt{\delta}\bigl(x \pm \sqrt{x^2 - 1}\bigr)$.

\textit{Subcase $x > 1$ (distinct real roots).} By Theorem~\ref{thm:universality},
$a(L) = (\lambda_1^{L+1} - \lambda_2^{L+1})/(\lambda_1 - \lambda_2)$.
Set $\beta := \operatorname{arccosh}(x) > 0$, so $\lambda_{1,2} = \sqrt{\delta}\,e^{\pm\beta}$.
Then
\[
  \frac{\lambda_1^{L+1} - \lambda_2^{L+1}}{\lambda_1 - \lambda_2}
  = (\sqrt{\delta})^L\, \frac{\sinh((L+1)\beta)}{\sinh\beta}
  = (\sqrt{\delta})^L\, U_L(\cosh\beta)
  = (\sqrt{\delta})^L\, U_L(x),
\]
using the standard identity $U_L(\cosh\beta) = \sinh((L+1)\beta)/\sinh\beta$.

\textit{Subcase $x = 1$ (repeated root $\lambda = \sqrt{\delta}$).}
$a(L) = (L+1)(\sqrt{\delta})^L$. Since $U_L(1) = L+1$, we have $(\sqrt{\delta})^L U_L(1) = a(L)$.

\textit{Subcase $x < 1$ (complex roots, $x = \cos\theta$, $\theta \in (0,\pi)$).}
Write $\lambda_{1,2} = \sqrt{\delta}\,e^{\pm i\theta}$; the same computation with
$\beta = i\theta$ gives
$a(L) = (\sqrt{\delta})^L\, \sin((L+1)\theta)/\sin\theta = (\sqrt{\delta})^L\, U_L(x)$.

\medskip
\noindent\textit{Part~(b): $d \ge 3$ with (H1) and (H2) $\Rightarrow$ no Chebyshev.}

Define the generating function $A(z) := \sum_{L=0}^\infty a(L)\, z^L$.
Since $a(L) = \mathbf{1}^\top \widetilde{T}^L e_0$,
\[
A(z) = \mathbf{1}^\top(I - z\widetilde{T})^{-1} e_0 = \frac{P(z)}{Q(z)},
\]
where $Q(z) = \det(I - z\widetilde{T})$ has degree $d$ and $P(z) = \mathbf{1}^\top
\operatorname{adj}(I - z\widetilde{T})\, e_0$ has degree $\le d-1$.

By (H1), $Q(z)$ has $d$ distinct roots at $z_j = 1/\widetilde{\lambda}_j$.
By (H2), the partial-fraction residue of $A(z)$ at each $z_j$ is non-zero.
Hence $\gcd(P,Q) = 1$ and the denominator of $A(z)$ in reduced form has degree exactly $d$.

If $a(L) = (\sqrt{\fd})^L U_L(x)$ held for some $x \ge 1$ and $\fd > 0$, then with
$N' := 2x\sqrt{\fd}$ the recurrence $a(L) = N'a(L-1) - \fd\,a(L-2)$ gives
$A(z) = 1/(1 - N'z + \fd\,z^2)$.  The reduced denominator would have
degree exactly $2$, contradicting degree $d \ge 3$.

\medskip
\noindent\textit{Part~(c): $k$-summand addition.}
By Lemma~\ref{lem:simple-evals}, for $k$-summand base-$N$ addition with $F = \{k-1\}$,
$d = k - 1$ and (H1) holds. By Lemma~\ref{lem:nonzero-res}, (H2) holds for $k \ge 3$.
For $k = 2$, $d = 1$ and part~(a) gives the geometric formula $a(L) = N^L$.
For $k = 3$, $d = 2$ and part~(a) gives the Chebyshev parametrization. For
$k \ge 4$, $d = k - 1 \ge 3$ and part~(b) rules out any Chebyshev form.
\end{proof}

\begin{table}[ht]
\centering
\caption{Chebyshev threshold data for multi-summand addition.
Characteristic polynomials are for the restricted \emph{count} matrix
$\widetilde{T}_{\mathrm{count}} = N^k\widetilde{T}$;
for $k=4$, $N=2$, the numerical coefficients reflect eigenvalues of order
$N^{k-j}$.}
\label{tab:chebyshev-data}
\small
\begin{tabular}{@{}cccl@{}}
\toprule
$k$ & $d$ & Characteristic polynomial of $\widetilde{T}_{\mathrm{count}}$ & Chebyshev? \\
\midrule
$2$ & $1$ & $\lambda - N$ & Geometric ($a(L) = N^L$; no Chebyshev, see Thm.~\ref{thm:cheb-threshold}(a)) \\
$3$ & $2$ & $\lambda^2 - N\lambda + \fd$ & Yes \\
$4$ ($F = \{3\}$) & $3$ & $\lambda^3 - 25\lambda^2 + 165\lambda - 280$ & No (irred.\ over $\mathbb{Q}$) \\
$4$ ($F = \{2,3\}$) & $2$ & $\lambda^2 - 15\lambda + 40$ & Yes \\
\bottomrule
\end{tabular}
\end{table}

For $k = 4$, $N = 2$, $F = \{3\}$: Python verification confirms irreducibility of
$\lambda^3 - 25\lambda^2 + 165\lambda - 280$ over $\mathbb{Q}$ (no rational roots among
$\pm$divisors of $280$).

\begin{remark}[Fibonacci--Poisson coincidence and dispersion regimes]
\label{rem:fib-poisson}
\label{sec:poisson}
The coupling parameter $x = 3/2$ arises in two roles:
as the Chebyshev evaluation point for base-$3$ doubling, where
$a(L) = U_L(3/2) = F(2L+2)$ (Theorem~\ref{thm:fibonacci}; the identity follows
from the Binet formula and $\cosh^{-1}(3/2) = 2\ln\phi$, see~\cite{Koshy2001});
and as the Poisson transition point for symmetric carry chains ($g = \kl$), where
$D_\infty = 1$ at $\mu = 1/3$, i.e., $g = \kl = t = N/3$.
Both roles reduce to the same parameter condition; the minimal instance is
$N = 3$, $g = \kl = t = 1$.

More generally, the asymptotic dispersion index
$D_\infty := \lim_{L\to\infty}\operatorname{Var}(\nu)/\mathbb{E}[\nu]
= \pi_0(1+\mu)/(1-\mu)$,
where $\pi_0 = \kl/(g+\kl)$ and $\mu = t/N$
(\cite[Theorem~6.3]{Moj2026}),
equals~$1$ if and only if $2\kl t = g(g+\kl)$, dividing the parameter space
into overdispersed ($D_\infty > 1$), Poisson ($D_\infty = 1$), and
underdispersed ($D_\infty < 1$) regimes.
The symmetric case $g = \kl$ recovers the Fibonacci point above.
\end{remark}

\section{Stochastic Classification}
\label{sec:classification}

\subsection{Binary state spaces}

\begin{definition}[Shadow equivalence]
\label{def:equiv}
Two systems with binary carry state spaces ($k = 2$) and transfer matrices
$T^{(1)}$, $T^{(2)}$ are \emph{shadow-equivalent}, $\fS_1 \sim \fS_2$, if
$T^{(2)} = M T^{(1)} M^{-1}$ for some $M \in \mathrm{GL}(2,\mathbb{R})$.
\end{definition}

\begin{theorem}[Stochastic shadow classification]
\label{thm:stochastic-shadow}
Let $\fS_1$ and $\fS_2$ have binary Markovian carry chains ($k = 2$). The following are
equivalent:
\begin{enumerate}[label=(\roman*)]
\item $\fS_1 \sim \fS_2$ (shadow-equivalent).
\item $\fS_1$ and $\fS_2$ share the same pair $(N, \fd)$.
\end{enumerate}
\end{theorem}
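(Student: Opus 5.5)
The plan is to prove both directions by reducing everything to the characteristic polynomial of the $2\times 2$ transfer matrix, via Lemma~\ref{lem:transfer-inv}.

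For (i)$\Rightarrow$(ii): similar matrices have equal characteristic polynomials. So if $T^{(2)} = M T^{(1)} M^{-1}$, then $\lambda^2 - N_1\lambda + \fd_1 = \lambda^2 - N_2\lambda + \fd_2$. Comparing coefficients gives $N_1 = N_2$ (trace) and $\fd_1 = \fd_2$ (determinant). This direction is immediate.

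For (ii)$\Rightarrow$(i): both matrices then have the same characteristic polynomial $\lambda^2 - N\lambda + \fd$, and I will show they are diagonalizable, which forces real similarity. I will split by the discriminant $\Delta = N^2 - 4\fd$.

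First, $\Delta \ge 0$ always. Indeed $N = g + t + \kl \ge g + t \ge 2\sqrt{gt} = 2\sqrt{\fd}$, which is the statement $x \ge 1$ recorded after Theorem~\ref{thm:universality}. So both eigenvalues are real.

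If $\Delta > 0$, each matrix has two distinct real eigenvalues. Each is therefore diagonalizable over $\mathbb{R}$ to the same diagonal matrix $\operatorname{diag}(\lambda_1,\lambda_2)$. Composing the two diagonalizing matrices gives the required $M \in \mathrm{GL}(2,\mathbb{R})$.

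The main obstacle is the degenerate case $\Delta = 0$. Here a repeated eigenvalue could in principle give a nontrivial Jordan block, and a Jordan block is not similar to a scalar matrix with the same characteristic polynomial. Equality in $N \ge g + t \ge 2\sqrt{gt}$ requires $\kl = 0$ and $g = t = N/2$. Then
\[
T = \begin{pmatrix} t & g \\ 0 & g\end{pmatrix} = \begin{pmatrix} N/2 & N/2 \\ 0 & N/2\end{pmatrix},
\]
which is a genuine Jordan block because $N \ge 1$. Hence every system with this $(N,\fd)$ has a transfer matrix similar to the same Jordan block $J_2(N/2)$. Any two such systems are therefore again similar.

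So in all cases equal $(N,\fd)$ forces similarity, which proves (ii)$\Rightarrow$(i). The one point to check carefully is that no stochastic shadow with $\Delta = 0$ could have a scalar (diagonalizable) transfer matrix. The explicit form above rules this out, since the off-diagonal entry $g = N/2$ is nonzero.
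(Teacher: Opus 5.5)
Your proof is correct and follows the paper's route: you get (i)$\Rightarrow$(ii) from trace and determinant, and (ii)$\Rightarrow$(i) from a case split on $N^2-4\fd$, using diagonalization for distinct eigenvalues and a common Jordan block for the repeated one. Your version is tighter than the paper's in two ways. First, you show via $N^2-4\fd=(g-t)^2+\kl^2+2\kl(g+t)\ge 0$ that the complex case is vacuous; the paper treats it anyway and uses this bound only later, in Corollary~\ref{cor:moduli}. Second, you verify rather than assume that the $N^2=4\fd$ matrix $\begin{pmatrix} N/2 & N/2\\ 0 & N/2\end{pmatrix}$ is non-scalar, which is in fact the same matrix for both systems. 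You should drop your opening claim that both matrices are always diagonalizable, since your own degenerate case contradicts it.
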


\begin{proof}
\textit{(i)$\Rightarrow$(ii).} Similar matrices have equal traces and
determinants: $N_1 = \operatorname{tr}(T_1) = \operatorname{tr}(T_2) = N_2$
and $\fd_1 = \det(T_1) = \det(T_2) = \fd_2$.

\textit{(ii)$\Rightarrow$(i).} We construct $M \in \mathrm{GL}(2,\mathbb{R})$ explicitly.
The characteristic polynomial $\lambda^2 - N\lambda + \fd$ is the same for $T_1$ and
$T_2$ (Lemma~\ref{lem:transfer-inv}).

\textit{Case $N^2 > 4\fd$ (distinct real eigenvalues).} Let $\lambda_\pm =
(N \pm \sqrt{N^2 - 4\fd})/2$ be the common eigenvalues. Both $T_1$ and $T_2$ are
diagonalizable over $\mathbb{R}$: if $v_\pm^{(i)}$ are eigenvectors of $T_i$ for
$\lambda_\pm$, set $M_i := [v_+^{(i)} \mid v_-^{(i)}] \in \mathrm{GL}(2,\mathbb{R})$,
so $T_i = M_i \operatorname{diag}(\lambda_+, \lambda_-) M_i^{-1}$. Then
$T_2 = M_2 M_1^{-1} T_1 (M_2 M_1^{-1})^{-1}$, giving
$M := M_2 M_1^{-1} \in \mathrm{GL}(2,\mathbb{R})$ with $T_2 = M T_1 M^{-1}$.

\textit{Case $N^2 = 4\fd$ (repeated eigenvalue $\lambda = N/2$).} For a
$2 \times 2$ real matrix with a single eigenvalue $\lambda$ and minimal polynomial
$(\mu - \lambda)^2$ (i.e.\ not a scalar multiple of the identity), there is a unique
Jordan form $\begin{pmatrix}\lambda & 1 \\ 0 & \lambda\end{pmatrix}$. Any two such
matrices are similar over $\mathbb{R}$ via the change-of-basis relating their Jordan
bases.

\textit{Case $N^2 < 4\fd$ (complex eigenvalues).} The real canonical form for a
$2 \times 2$ real matrix with eigenvalues $\alpha \pm i\beta$ ($\beta \ne 0$) is
$\begin{pmatrix}\alpha & -\beta \\ \beta & \alpha\end{pmatrix}$; again any two real
matrices with the same complex eigenvalue pair are similar over $\mathbb{R}$.

In all three cases $M \in \mathrm{GL}(2,\mathbb{R})$ exists, so $\fS_1 \sim \fS_2$.
\end{proof}

\begin{corollary}[The moduli space]
\label{cor:moduli}
The pair $(N, \fd)$ is a complete invariant for the
$\mathrm{GL}(2,\mathbb{R})$-similarity class of $T$, and hence for all avoidance-related
quantities. The set of achievable pairs is
\[
  \Omega := \{(N, \fd) \in \mathbb{Z}_{>0} \times \mathbb{Z}_{\ge 0} :
             \exists\, g, t \in \mathbb{Z}_{\ge 0},\; gt = \fd,\; g + t \le N\}.
\]
The condition $N^2 \ge 4\fd$ is necessary for $(N,\fd) \in \Omega$ but not sufficient.
The precise realizability condition on $\fd \ge 1$ is $N \ge \sigma(\fd)$, where
$\sigma(\fd) := \min_{d \mid \fd,\, 1 \le d \le \sqrt{\fd}} (d + \fd/d)$.
\end{corollary}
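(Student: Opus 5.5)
The corollary has three claims: the completeness of $(N,\fd)$ as an invariant, the description of $\Omega$, and the realizability criterion $N \ge \sigma(\fd)$.

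\textbf{Completeness.} This follows directly from Theorem~\ref{thm:stochastic-shadow}. The pair $(N,\fd)$ determines the similarity class. Conversely, trace and determinant are similarity invariants, so they recover $(N,\fd)$. For the avoidance-related quantities, Theorem~\ref{thm:universality} shows that $a(L)$ satisfies a recurrence with coefficients $N,\fd$ and initial values $1,N$, so $a(L)$ depends only on the pair. More generally, any quantity built from $\operatorname{tr}$ and $\det$ (for instance the characteristic polynomial, eigenvalues, or the Chebyshev parameter $x$) is a function of $(N,\fd)$.

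\textbf{Description of $\Omega$.} A binary carry system is given by nonnegative integers $g,t,\kl$ with $g+t+\kl=N$, and by \eqref{eq:transfer} it has $\fd=tg$. A pair $(N,\fd)$ is therefore achievable exactly when there exist $g,t\ge 0$ with $gt=\fd$ and $\kl:=N-g-t\ge 0$. This is the set $\Omega$ as written.

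\textbf{Necessity of $N^2\ge 4\fd$.} Apply AM--GM: $N\ge g+t\ge 2\sqrt{gt}=2\sqrt{\fd}$. To see that this condition is not sufficient, I will give a counterexample with $\fd$ prime. Take $\fd=3$, so that $\sigma(3)=4$. The pair $N=4$ is realizable with $g=1$, $t=3$, $\kl=0$. Instead, $\fd=5$, $N=5$ works as a counterexample: $N^2=25\ge 20$, but the only factorizations are $1\cdot 5$ and $5\cdot 1$, which need $N\ge 6$. So $(5,5)\notin\Omega$.

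\textbf{Realizability for $\fd\ge 1$.} Since $gt=\fd\ge 1$, both $g$ and $t$ are positive divisors of $\fd$. By the symmetry $g\leftrightarrow t$, we may assume the smaller factor $d\le\sqrt{\fd}$, and then $g+t=d+\fd/d$. Realizability means $N\ge d+\fd/d$ for some such divisor $d$. This is equivalent to $N\ge\min_d(d+\fd/d)=\sigma(\fd)$. The minimum is taken over a nonempty set, because $d=1$ always qualifies.

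The main (mild) obstacle is stating the counterexample cleanly and handling the boundary case $\fd=0$. That case is excluded from the $\sigma$ criterion and is always realizable: take $g=0$, $t=0$, $\kl=N$.
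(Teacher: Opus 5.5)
Your proof is correct and follows essentially the same route as the paper: AM--GM for the necessity of $N^2 \ge 4\fd$ and a prime-$\fd$ counterexample for non-sufficiency (the paper uses $(6,7)$; your $(5,5)$ works equally well). You also spell out two things the paper leaves implicit, namely completeness via Theorem~\ref{thm:stochastic-shadow} and the divisor argument for $N \ge \sigma(\fd)$; the aside about $\fd = 3$ is a false start and should be deleted.
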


\begin{proof}
Necessity of $N^2 \ge 4\fd$ follows from AM--GM applied to any realizing factorization
$gt = \fd$ with $g, t \ge 0$:
$N^2 - 4\fd = (g-t)^2 + \kl^2 + 2\kl(g+t) \ge 0$.
Sufficiency fails: the pair $(N, \fd) = (6, 7)$ satisfies $N^2 = 36 \ge 28 = 4\fd$, but
$\fd = 7$ is prime, so the only factorizations require $g + t = 8 > 6 = N$.
\end{proof}

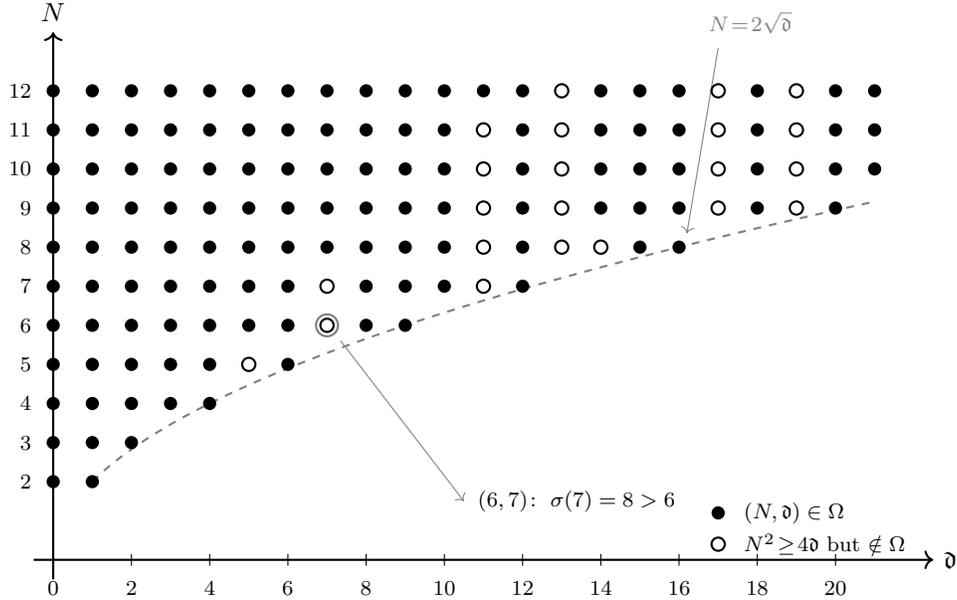
\begin{figure}[ht]
\centering
\begin{tikzpicture}[
    scale=0.52,
    ach/.style={fill=black, circle, inner sep=0pt, minimum size=5pt},
    nonach/.style={draw=black, circle, inner sep=0pt, minimum size=5pt,
                   fill=white, thick}
  ]
  \draw[->, thick] (-0.5,0) -- (22.5,0)
    node[right, font=\small] {$\fd$};
  \draw[->, thick] (0,-0.5) -- (0,13.5)
    node[above, font=\small] {$N$};
  \foreach \d in {0,2,...,20} {
    \draw (\d,-0.15) -- (\d,0.15);
    \node[below, font=\scriptsize] at (\d,-0.3) {$\d$};
  }
  \foreach \n in {2,3,...,12} {
    \draw (-0.15,\n) -- (0.15,\n);
    \node[left, font=\scriptsize] at (-0.3,\n) {$\n$};
  }
  \draw[dashed, gray, thick]
    plot[domain=1:21, samples=80, smooth] (\x, {2*sqrt(\x)});
  \node[gray, font=\scriptsize, anchor=south west] at (16.5,13.2)
    {$N\!=\!2\sqrt{\fd}$};
  \draw[gray, thin, ->] (17,13.1) -- ({16.2},{2*sqrt(16.2)+0.3});
  \foreach \d in {0,1} { \node[ach] at (\d,2) {}; }
  \foreach \d in {0,1,2} { \node[ach] at (\d,3) {}; }
  \foreach \d in {0,1,2,3,4} { \node[ach] at (\d,4) {}; }
  \foreach \d in {0,1,2,3,4,6} { \node[ach] at (\d,5) {}; }
  \node[nonach] at (5,5) {};
  \foreach \d in {0,1,2,3,4,5,6,8,9} { \node[ach] at (\d,6) {}; }
  \node[nonach] at (7,6) {};
  \foreach \d in {0,1,2,3,4,5,6,8,9,10,12} { \node[ach] at (\d,7) {}; }
  \node[nonach] at (7,7) {};
  \node[nonach] at (11,7) {};
  \foreach \d in {0,1,2,3,4,5,6,7,8,9,10,12,15,16}
    { \node[ach] at (\d,8) {}; }
  \node[nonach] at (11,8) {};
  \node[nonach] at (13,8) {};
  \node[nonach] at (14,8) {};
  \foreach \d in {0,1,2,3,4,5,6,7,8,9,10,12,14,15,16,18,20}
    { \node[ach] at (\d,9) {}; }
  \node[nonach] at (11,9) {};
  \node[nonach] at (13,9) {};
  \node[nonach] at (17,9) {};
  \node[nonach] at (19,9) {};
  \foreach \d in {0,1,2,3,4,5,6,7,8,9,10,12,14,15,16,18,20,21}
    { \node[ach] at (\d,10) {}; }
  \node[nonach] at (11,10) {};
  \node[nonach] at (13,10) {};
  \node[nonach] at (17,10) {};
  \node[nonach] at (19,10) {};
  \foreach \d in {0,1,2,3,4,5,6,7,8,9,10,12,14,15,16,18,20,21}
    { \node[ach] at (\d,11) {}; }
  \node[nonach] at (11,11) {};
  \node[nonach] at (13,11) {};
  \node[nonach] at (17,11) {};
  \node[nonach] at (19,11) {};
  \foreach \d in {0,1,2,3,4,5,6,7,8,9,10,11,12,14,15,16,18,20,21}
    { \node[ach] at (\d,12) {}; }
  \node[nonach] at (13,12) {};
  \node[nonach] at (17,12) {};
  \node[nonach] at (19,12) {};
  \draw[thick, gray] (7,6) circle (8pt);
  \draw[gray, thin, ->] (7.35,5.6) -- (10.5,1.5);
  \node[font=\scriptsize, anchor=west] at (10.6,1.5)
    {$(6,7)\!:\ \sigma(7)=8>6$};
  \node[ach] at (17,1.2) {};
  \node[font=\scriptsize, anchor=west] at (17.4,1.2)
    {$(N,\fd)\in\Omega$};
  \node[nonach] at (17,0.4) {};
  \node[font=\scriptsize, anchor=west] at (17.4,0.4)
    {$N^2\!\ge\!4\fd$ but $\notin\Omega$};
\end{tikzpicture}
\caption{The moduli space~$\Omega$ of shadow-equivalence classes
for binary carry chains ($k = 2$), shown for $N \le 12$ and
$\fd \le 21$.  Filled circles are achievable pairs
$(N, \fd) \in \Omega$; open circles satisfy the necessary
condition $N^2 \ge 4\fd$ but admit no integer factorization
$gt = \fd$ with $g + t \le N$.  The dashed curve is the
AM--GM bound $N = 2\sqrt{\fd}$.}
\label{fig:moduli}
\end{figure}

\subsection{General state spaces}

\begin{definition}[Class $\cM(k,N)$]
\label{def:mclass}
Let $\cM(k,N)$ denote the class of all Markovian carry chains of order~$1$ with
state space $\{0,\ldots,k-1\}$ and digit alphabet of size~$N$,
with $\mu$ any probability measure on the digit alphabet.
The transfer matrix $T \in \mathbb{R}^{k\times k}$ has entries
$T[c',c] = \mu(\{d : \phi(c,d) = c'\})$.
\end{definition}

\begin{example}[Non-trivial equivalence classes in $\cM(2,3)$]
\label{ex:M23}
For the GEN/PROP/KILL model with $k = 2$, $N = 3$, and digit measure $\mu$, set
$g := \mu(\mathrm{GEN})$, $t := \mu(\mathrm{PROP})$, $\ell := \mu(\mathrm{KILL})$ with
$g + t + \ell = 1$.  Consider four systems:
\begin{align*}
\text{System A (uniform):} &\quad g = \ell = t = \tfrac{1}{3},
  & \chi_A = \lambda^2 - \tfrac{4}{3}\lambda + \tfrac{1}{3}. \\[4pt]
\text{System B:} &\quad g = \tfrac{1}{2},\; t = \ell = \tfrac{1}{4},
  & \chi_B = \lambda^2 - \tfrac{5}{4}\lambda + \tfrac{1}{4}. \\[4pt]
\text{System C:} &\quad g = \ell = \tfrac{1}{4},\; t = \tfrac{1}{2},
  & \chi_C = \lambda^2 - \tfrac{3}{2}\lambda + \tfrac{1}{2}. \\[4pt]
\text{System D:} &\quad g = \tfrac{1}{6},\; t = \tfrac{1}{2},\; \ell = \tfrac{1}{3},
  & \chi_D = \lambda^2 - \tfrac{3}{2}\lambda + \tfrac{1}{2}.
\end{align*}
Systems A, B, C have pairwise distinct characteristic polynomials, hence are pairwise
non-equivalent. Systems C and D have identical characteristic polynomials despite
different digit distributions, so they are shadow-equivalent.
\end{example}

\begin{theorem}[General stochastic classification]
\label{thm:stoch-general}
Let $\fS_1, \fS_2 \in \cM(k,N)$ and assume both transfer matrices $T_1, T_2$ have $k$
pairwise distinct eigenvalues (simple spectrum). The following are equivalent:
\begin{enumerate}[label=(\roman*)]
\item $T_1$ and $T_2$ are $\mathrm{GL}(k,\mathbb{R})$-similar.
\item $T_1$ and $T_2$ have the same characteristic polynomial.
\item $T_1$ and $T_2$ have the same eigenvalue multiset.
\end{enumerate}
The moduli space within $\cM(k,N)$ with simple spectrum is
$(k-1)$-dimensional, parametrized by the $k-1$ non-leading coefficients of the
characteristic polynomial (the stochasticity constraint $\chi_T(1) = 0$ removes one
degree of freedom).
\end{theorem}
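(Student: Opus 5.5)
The equivalences come from linear algebra over $\mathbb{R}$; the moduli statement needs a short parameter count.

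\emph{(i)$\Rightarrow$(ii).} If $T_2 = M T_1 M^{-1}$ with $M \in \mathrm{GL}(k,\mathbb{R})$, then $\det(\lambda I - T_2) = \det\bigl(M(\lambda I - T_1)M^{-1}\bigr) = \det(\lambda I - T_1)$.

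\emph{(ii)$\Leftrightarrow$(iii).} A monic polynomial over $\mathbb{C}$ is determined by its roots with multiplicity, and it determines them.

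\emph{(iii)$\Rightarrow$(i).} This is the only step with content, because the eigenvalues may be complex and we need a real conjugating matrix. Under the simple-spectrum hypothesis each $T_i$ is diagonalizable over $\mathbb{C}$. Since $T_i$ is real, its non-real eigenvalues come in conjugate pairs $\alpha \pm i\beta$ with $\beta \neq 0$. I would pass to the real canonical form: for each real eigenvalue take a real eigenvector. For each conjugate pair with eigenvector $w = a + ib$, take the real vectors $a, b$; they span a $2$-dimensional invariant subspace on which $T_i$ acts by $\begin{pmatrix}\alpha & -\beta\\ \beta & \alpha\end{pmatrix}$, exactly as in the third case of Theorem~\ref{thm:stochastic-shadow}. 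Because the eigenvalues are distinct, these real vectors together form a basis, giving $T_i = M_i D M_i^{-1}$ with $M_i \in \mathrm{GL}(k,\mathbb{R})$. The block-diagonal $D$ depends only on the common eigenvalue multiset, with the blocks ordered the same way for both matrices. Then $M := M_2 M_1^{-1}$ conjugates $T_1$ to $T_2$.

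\emph{Moduli statement.} I would first note that every $T \in \cM(k,N)$ is column-stochastic, since each column of $T$ is the pushforward of the probability measure $\mu$. Hence $\mathbf{1}^\top T = \mathbf{1}^\top$, so $1$ is an eigenvalue and $\chi_T(1) = 0$. By (i)$\Leftrightarrow$(ii), similarity classes with simple spectrum correspond injectively to their characteristic polynomials $\lambda^k + a_{k-1}\lambda^{k-1} + \cdots + a_0$. The $k$ coefficients $(a_{k-1},\ldots,a_0)$ satisfy the single linear constraint $1 + \sum_m a_m = 0$. So the image lies in an affine hyperplane of dimension $k-1$, parametrized by any $k-1$ of the non-leading coefficients. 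Simplicity of the spectrum (nonvanishing discriminant) is an open condition and removes no dimension.

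\emph{Main obstacle.} Showing that the moduli space is \emph{exactly} $(k-1)$-dimensional, not merely at most that, is the delicate point. It requires that the map from digit measures $\mu$ to characteristic polynomials has image with nonempty interior in the hyperplane. I would try to exhibit $k-1$ independent directions by perturbing $\mu$ around a diagonal-dominant (near-identity) transfer matrix. If $\phi$ is rich enough, this realizes a triangular $T$ with arbitrary diagonal entries $1, \lambda_1, \ldots, \lambda_{k-1}$. I expect this realizability question is what the author's parametrization claim implicitly assumes, and I would state that assumption explicitly if a general construction is unavailable.
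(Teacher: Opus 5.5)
Your argument for the three equivalences is the paper's argument. You use invariance of $\det(\lambda I - T)$ under conjugation, the correspondence between a monic polynomial and its root multiset, and a common real canonical form (with $2\times 2$ rotation--scaling blocks for conjugate pairs) followed by $M = M_2M_1^{-1}$. You also justify why the real and imaginary parts of the eigenvectors form a real basis, which the paper takes for granted. The paper gives no argument at all for the moduli sentence. Your upper bound there is correct: column-stochasticity gives $\chi_T(1)=0$, so the characteristic polynomials lie in a $(k-1)$-dimensional affine hyperplane.

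The obstacle you flag is a genuine gap. It cannot be closed by assuming $\phi$ is ``rich enough,'' because the dimension claim is false when $N<k$.

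\textbf{Why the claim fails for $N<k$.} For a fixed $\phi$ we have $T(\mu)=\sum_{d}\mu(d)P_d$, where $P_d[c',c]=1$ iff $\phi(c,d)=c'$. So the transfer matrices form an affine image of the $(N-1)$-simplex, and the coefficients of $\chi_T$ are polynomial functions of $\mu$. Taking the union over the finitely many maps $\phi$ does not raise the dimension above $N-1$. Hence the moduli space has dimension at most $\min(N-1,k-1)$; for example, for $k=3$, $N=2$ it is at most one-dimensional.

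\textbf{What is true: the case $N\ge k$.} Under this extra hypothesis your triangular idea can be made explicit. Take $\phi(c,d)=\min(c,d)$. Then:
\begin{itemize}
\item $T[c',c]=\mu(\{c'\})$ for $c'<c$;
\item $T[c',c]=0$ for $c'>c$;
\item $T[c,c]=\mu(\{d\ge c\})=:s_c$, with $s_0=1$.
\end{itemize}
Hence $\chi_T(\lambda)=(\lambda-1)\prod_{c=1}^{k-1}(\lambda-s_c)$.

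Every sequence $1>s_1>\cdots>s_{k-1}>0$ is realized, by $\mu(\{c\})=s_c-s_{c+1}$ for $c\le k-2$ and $\mu(\{k-1\})=s_{k-1}$, and the spectrum is then simple. The map from $(s_1,\ldots,s_{k-1})$ to the non-leading coefficients of $\prod_c(\lambda-s_c)$ has Vandermonde Jacobian $\pm\prod_{i<j}(s_i-s_j)\neq 0$. Since $q\mapsto(\lambda-1)q$ is an affine isomorphism onto the hyperplane $\{\chi(1)=0\}$, the image has nonempty interior there. So the dimension is exactly $k-1$ when $N\ge k$.

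The diagonal entries in this construction are not arbitrary, as your sketch suggested, but an open set of them is all that is needed. With the hypothesis $N\ge k$ added to the statement, your argument plus this construction gives a complete proof, which is more than the paper supplies.
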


\begin{proof}
(i)$\Rightarrow$(ii): Similar matrices have equal characteristic polynomials.
(ii)$\Leftrightarrow$(iii): Standard.
(ii)$\Rightarrow$(i): Both $T_1$ and $T_2$, having simple spectrum, share the same
real canonical form~$J$ (each pair of complex conjugate eigenvalues contributes a
$2\times 2$ rotation-scaling block, each real eigenvalue a $1\times 1$ block).
If $T_i = P_i J P_i^{-1}$, then $M := P_2 P_1^{-1}$ gives $T_2 = M T_1 M^{-1}$.
\end{proof}

\begin{remark}[Uniform $k$-summand addition is a single point]
\label{rem:holte-degenerate}
When $\mu$ is the uniform measure on $k$-summand base-$N$ addition, the Holte matrix
depends only on $(k,N)$ (Theorem~\ref{thm:holte-spectrum}).
All systems in $\cM(k,N)$ with uniform digit measure are \emph{identical} as
Markov chains: the uniform subclass collapses to a single point of the moduli
space, with characteristic polynomial $\prod_{j=0}^{k-1}(\lambda - N^{-j})$.
The classification theorem is non-trivial precisely because $\cM(k,N)$ contains
systems with non-uniform digit measures, such as Systems C and D of
Example~\ref{ex:M23}.
\end{remark}

\section{Open Questions and Further Results}
\label{sec:open}

\begin{question}
\label{q:orthogonal-d3}
The Chebyshev threshold is $d \le 2$. For $d \ge 3$, is there a family of orthogonal
polynomials generalizing $U_L$ that parametrizes the avoidance counts? By
Proposition~\ref{prop:stirling-lagrange}, the characteristic
polynomial $\lambda^3 - 25\lambda^2 + 165\lambda - 280$ for $k = 4$, $N = 2$,
$F = \{3\}$ is irreducible over $\mathbb{Q}$.  The recurrence
$a(L) = 25\,a(L-1) - 165\,a(L-2) + 280\,a(L-3)$
with initial values $a(0) = 1$, $a(1) = 16$, $a(2) = 255$ yields the sequence
\[
1,\; 16,\; 255,\; 4015,\; 62780,\; 978425,\; 15226125,\; 236791400,\; \ldots
\]
(verified by transfer-matrix computation).
This sequence does not appear in OEIS as of April~2026.
\end{question}

\begin{remark}[Multiplicative shadow-freeness is impossible---a contrast]
\label{prop:mult-impossible}
While the classification above quantifies additive shadows, the multiplicative
case admits no shadow-free encoding at all:
for $N \ge 2$ and $L \ge 2$, no bijection
$h\colon \mathbb{Z}/N^L\mathbb{Z} \to (\mathbb{Z}/N\mathbb{Z})^L$ makes multiplication
shadow-free, i.e., there is no operation
$g\colon \mathbb{Z}/N\mathbb{Z} \times \mathbb{Z}/N\mathbb{Z} \to \mathbb{Z}/N\mathbb{Z}$
with $h(ab)_i = g(h(a)_i,\, h(b)_i)$ for all $a, b$ and all~$i$.

\begin{proof}
Suppose such $h$ and $g$ exist, so that $h$ is a monoid isomorphism
$(\mathbb{Z}/N^L\mathbb{Z}, \times) \xrightarrow{\sim}
((\mathbb{Z}/N\mathbb{Z})^L, g^{\mathrm{cw}})$.

\textit{Step~1: identity and absorbing elements.}
From $1 \cdot a = a \cdot 1 = a$: $h(1) = (e, \ldots, e)$ for a unique two-sided
identity~$e$.  From $0 \cdot a = 0$: $h(0) = (z, \ldots, z)$ for a unique absorbing
element~$z$.  Injectivity forces $e \ne z$.

\textit{Step~2: nilpotent count.}
A monoid isomorphism bijects nilpotent elements.  Write $N = \prod_{i=1}^r p_i^{a_i}$.
By CRT, $|\mathrm{Nil}(\mathbb{Z}/N^L\mathbb{Z})| = \prod_{i=1}^r p_i^{La_i - 1}$.
In $((\mathbb{Z}/N\mathbb{Z})^L, g^{\mathrm{cw}})$:
$|\mathrm{Nil}| = |\mathrm{Nil}_g|^L$ for some integer $|\mathrm{Nil}_g| \ge 1$.

\textit{Step~3: $p$-adic valuation obstruction.}
Equating: $|\mathrm{Nil}_g|^L = \prod_{i} p_i^{La_i - 1}$.  For each prime~$p_i$,
the left side has $v_{p_i} \equiv 0 \pmod{L}$, while the right side has
$v_{p_i} = La_i - 1 \equiv -1 \pmod{L}$.  For $L \ge 2$: $-1 \not\equiv 0 \pmod{L}$,
a contradiction.
\end{proof}
\end{remark}

\begin{remark}[Shadow uncertainty]
\label{rem:shadow-uncertainty}
For $N \ge 2$ and $L \ge 2$, every positional encoding~$h$ satisfies both
non-trivial additive shadows (since $\mathbb{Z}/N^L\mathbb{Z} \not\cong
(\mathbb{Z}/N\mathbb{Z})^L$ as groups)
and non-trivial multiplicative shadows
(Remark~\ref{prop:mult-impossible}):
neither addition nor multiplication can be made shadow-free.
The minimum of the total shadow cost over all encodings, and the
encodings achieving it, require techniques beyond
the spectral methods of this paper.
\end{remark}

\section*{Acknowledgments}

Iterative manuscript editing was
carried out with the assistance of Claude (Anthropic).
All mathematical content---definitions, theorem statements, proof
strategies, and final arguments---is the author's own;
the author takes full responsibility for the correctness
of all results.

\section*{Statements and Declarations}

\medskip
\noindent\textit{Funding.}
No funding was received for conducting this study.

\medskip
\noindent\textit{Competing Interests.}
The author has no competing interests to declare that are relevant to the
content of this article.

\medskip
\noindent\textit{Data Availability.}
The computational verification code for the transfer matrices, spectral
data, and eigenvector systems is available from the author upon reasonable
request.

\end{document}